\theoremstyle{plain}
\newtheorem{theorem}{Theorem}[section]
\newtheorem{lemma}[theorem]{Lemma}
\newtheorem{proposition}[theorem]{Proposition}
\newtheorem{corollary}[theorem]{Corollary}
\theoremstyle{definition}
\newtheorem{example}[theorem]{Example}
\newtheorem{question}{Question}
\theoremstyle{remark}
\newtheorem{remark}[theorem]{Remark}
\newtheorem{notations}[theorem]{Notations}
\newcommand{\tc}{\ensuremath{\mathcal{T}}}
\newcommand{\oc}{\ensuremath{\mathcal{O}}}
\newcommand{\nc}{\ensuremath{\mathcal{N}}}
\newcommand{\vc}{\ensuremath{\mathcal{V}}}
\newcommand{\ic}{\ensuremath{\mathcal{I}}}
\newcommand{\qc}{\ensuremath{\mathcal{Q}}}
\newcommand{\bX}{\overline{X}}
\newcommand{\ks}{k_{\star}}
\newcommand{\tks}{\tilde{k}_{\star}}
\newcommand{\tY}{\widetilde{Y}}
\newcommand{\tX}{\widetilde{X}}
\newcommand{\tx}{\tilde{x}}
\newcommand{\tH}{\widetilde{H}}
\newcommand{\tD}{\widetilde{\Delta}}
\newcommand{\V}{\mathbb{V}}
\newcommand{\W}{\mathbb{W}}
\newcommand{\Proj}{\mathbb{P}}
\begin{document}

\title[ N\'eron-Severi group of a general hypersurface]
{ N\'eron-Severi group of a general hypersurface}

\author{Vincenzo Di Gennaro }
\address{Universit\`a di Roma \lq\lq Tor Vergata\rq\rq, Dipartimento di Matematica,
Via della Ricerca Scientifica, 00133 Roma, Italy.}
\email{digennar@axp.mat.uniroma2.it}

\author{Davide Franco }
\address{Universit\`a di Napoli
\lq\lq Federico II\rq\rq, Dipartimento di Matematica e
Applicazioni \lq\lq R. Caccioppoli\rq\rq, P.le Tecchio 80, 80125
Napoli, Italy.} \email{davide.franco@unina.it}

\abstract  In this paper we extend the well-known theorem of
Angelo Lopez concerning the Picard group of the general space
projective surface containing a given smooth projective curve, to
the intermediate N\'eron-Severi group of a general hypersurface in
any smooth projective variety.

\bigskip\noindent
{\it{Keywords}}: Noether-Lefschetz Theory, N\'eron-Severi
group, Borel-Moore Homology, Monodromy representation, Isolated
singularities, Blowing-up.

\medskip\noindent {\it{MSC2010}}\,: 14B05, 14C20, 14C21, 14C22, 14C25,
14C30, 14F43, 14F45, 14J70.

\endabstract

\maketitle

\section{Introduction}

A well-known result of Angelo Lopez \cite{Lopez}, inspired by a
previous work of Griffiths and Harris \cite{GH}, provides a recipe
for the computation of the N\'eron-Severi group $NS_{1}(S;\mathbb
Z)$ of a general complex surface $S$ of sufficiently large degree
in $\Proj ^3$, containing a given smooth curve. For a smooth
projective variety $X$, we define the $i$-th N\'eron-Severi group
$NS_{i}(X;\mathbb Z)$ as the image of the cycle map $A_{i}(X)\to
H_{2i}(X;\mathbb{Z})\cong H^{2(\dim X-i)}(X;\mathbb{Z})$
(\cite{FultonIT}, $\S 19.1$). This work was intended as an attempt to
extend Lopez's result to the intermediate N\'eron-Severi group
$NS_{\dim X/2}(X;\mathbb Z)$ of a general hypersurface $X$, in any
smooth projective variety. In the previous paper \cite{RCMP} we
already obtained  a generalization, but only in the case of
$\mathbb Q$-coefficients, i.e. only for $NS_{\dim X/2}(X;\mathbb
Q):=NS_{\dim X/2}(X;\mathbb Z)\otimes_{\mathbb Z} \mathbb Q$. More
precisely, in (\cite{RCMP}, Theorem 1.2), we proved the following:

\begin{theorem}
\label{RCMPLopez} Let $Y\subset \Proj =\Proj (\mathbb{C})$ be a
smooth projective variety of dimension $m+1=2r+1$ and set
$\vc_d:={\rm{Im}}(H^0(\Proj,\oc_{\Proj}(d))\to  H^0(Y,\oc_Y(d)))$.
Let $Z\subset Y$ be a closed subscheme of dimension $r$  contained
in a regular sequence of smooth hypersurfaces $\overline{X}\in
\vert\vc_d\vert$, $G_i\in \vert\vc_{d_i}\vert$, $1\leq i \leq r $,
such that $d>d_1 > \dots > d_r$. Let $X\in \vert\vc_d\vert$ be a
very general hypersurface containing $Z$, so that $Z$ is a closed
subscheme of the complete intersection $\Delta:=X\cap G_1\cap\dots
\cap G_r$,
$$\Delta =Z\cup R= (\bigcup_{i=1}^{\rho}Z_i)\cup
(\bigcup_{j=1}^{\sigma}R_j).
$$
Assume that the vanishing cohomology of $X$ is not of pure Hodge
type $(\frac{m}{2},\frac{m}{2})$. Denote by $H^m(X;
\mathbb{Z})_{\Delta}$ the subgroup of $H^m(X; \mathbb{Z})$
generated by the components of $\Delta$, and by $H^m(X;
\mathbb{Z})_{\Delta^-}$ the subgroup of $H^m(X; \mathbb{Z})$
generated by $Z_1, \dots ,Z_{\rho}$, $R_1, \dots ,R_{\sigma-1}$.
Then we have:
\begin{enumerate}
\item $H^m(X; \mathbb{Z})_{\Delta}$ is free of rank $\rho +\sigma $;
\smallskip
\item $NS_r(X;\mathbb{Q})= NS_{r+1}(Y;\mathbb{Q})\oplus H^m(X; \mathbb{Q})_{\Delta^-}$.
\end{enumerate}
\end{theorem}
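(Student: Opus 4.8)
The plan is to compare the cycle classes of the components of $\Delta$ with the Lefschetz decomposition of $H^m(X;\mathbb{Q})$ and then to pin down the vanishing Hodge classes by a monodromy argument. Write $\io\colon X\hookrightarrow Y$ for the inclusion. Since $\dim Y=m+1$ and $X$ is a smooth ample divisor, the Lefschetz hyperplane theorem gives the splitting
$$H^m(X;\mathbb{Q})=\io^{*}H^m(Y;\mathbb{Q})\oplus H^m(X;\mathbb{Q})_{\mathrm{van}}$$
into the fixed (restricted) part and the vanishing cohomology. In $H^{2r}(X)=H^m(X)$ one has $[\Delta]=(d_1\cdots d_r)\,h^{r}$ with $h=c_1(\oc_X(1))$; as $h$ is ample and $\dim X=2r$, the class $h^{r}$ is nonzero, so $[\Delta]$ is a nonzero element of the fixed part $\io^{*}H^m(Y;\mathbb{Q})$. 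On the other hand, as cycles $\Delta=\bigcup_i Z_i\cup\bigcup_j R_j$ with reduced components, so the fundamental relation
$$\sum_{i=1}^{\rho}[Z_i]+\sum_{j=1}^{\sigma}[R_j]=[\Delta]$$
holds in $H^m(X;\mathbb{Z})$. Writing $[Z_i]=f_i+v_i$ and $[R_j]=g_j+w_j$ with $f_i,g_j$ fixed and $v_i,w_j$ vanishing, this forces $\sum_i v_i+\sum_j w_j=0$ in $H^m(X;\mathbb{Q})_{\mathrm{van}}$.

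The key input, and the main obstacle, is a Noether--Lefschetz/monodromy statement with two consequences: for very general $X\in|\vc_d|$ containing $Z$, (i) the Hodge classes in $H^m(X;\mathbb{Q})_{\mathrm{van}}$ are exactly the span of $v_1,\dots,v_\rho,w_1,\dots,w_\sigma$, and these satisfy no $\mathbb{Q}$-linear relation other than the multiples of $\sum_i v_i+\sum_j w_j=0$; and (ii) $NS_r(X;\mathbb{Q})\cap\io^{*}H^m(Y;\mathbb{Q})=\io^{*}NS_{r+1}(Y;\mathbb{Q})$. Here all the hypotheses enter. The regular-sequence condition together with $d>d_1>\dots>d_r$ is used to produce a sufficiently large monodromy group for the restricted family of hypersurfaces through $Z$ (via a degeneration and blowing-up argument, together with the Borel--Moore description of the cohomology of the complement of $\Delta$), so that the monodromy representation on the quotient of $H^m(X)_{\mathrm{van}}$ by the span of the $v_i,w_j$ has no nonzero invariants; the hypothesis that the vanishing cohomology is not of pure type $(\frac m2,\frac m2)$ guarantees that this quotient carries a nontrivial Hodge structure, so that the global invariant cycle theorem rules out any unexpected invariant Hodge class. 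Consequence (ii) is obtained by spreading a fixed algebraic class on the very general $X$ to a relative cycle over the base and pushing it forward to $Y$. I expect (i) to be the technically hardest point, and it is precisely where the present integral statement sharpens the rational result of the previous paper.

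Granting this, part (1) is linear algebra. The relation shows that $v_1,\dots,v_\rho,w_1,\dots,w_\sigma$ span a subspace of dimension exactly $\rho+\sigma-1$. Since $[\Delta]=\sum_i[Z_i]+\sum_j[R_j]$ is a nonzero vector of the fixed part, the full collection $[Z_1],\dots,[Z_\rho],[R_1],\dots,[R_\sigma]$ spans a subspace of dimension $(\rho+\sigma-1)+1=\rho+\sigma$; equivalently, these integral classes are $\mathbb{Q}$-linearly independent. A finite family of elements of an abelian group whose images in the rationalization are linearly independent admits no nonzero integral relation and generates a torsion-free subgroup, so $H^m(X;\mathbb{Z})_{\Delta}$ is free of rank $\rho+\sigma$.

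For part (2), I first note that the projection $H^m(X;\mathbb{Q})_{\Delta^-}\to H^m(X;\mathbb{Q})_{\mathrm{van}}$ is an isomorphism onto the vanishing Hodge classes: any element of $H^m(X;\mathbb{Q})_{\Delta^-}$ lying in the fixed part would yield a relation among the $v_i,w_j$ not involving $w_\sigma$, which by (i) must be trivial, so the projection is injective; it is surjective because both sides have dimension $\rho+\sigma-1$ by part (1) and by (i). Thus $H^m(X;\mathbb{Q})_{\Delta^-}$ accounts for the entire vanishing part of $NS_r(X;\mathbb{Q})$, and $H^m(X;\mathbb{Q})_{\Delta^-}\cap\io^{*}H^m(Y;\mathbb{Q})=0$. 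Now given $\alpha\in NS_r(X;\mathbb{Q})$, subtract the unique $\eta\in H^m(X;\mathbb{Q})_{\Delta^-}$ with the same vanishing projection: then $\alpha-\eta$ has zero vanishing part, so it lies in $NS_r(X;\mathbb{Q})\cap\io^{*}H^m(Y;\mathbb{Q})=\io^{*}NS_{r+1}(Y;\mathbb{Q})$ by (ii). Since $\io^{*}$ is injective on $H^m(Y;\mathbb{Q})$ by Lefschetz, the two summands meet only in $0$, and we obtain the direct sum decomposition
$$NS_r(X;\mathbb{Q})=\io^{*}NS_{r+1}(Y;\mathbb{Q})\oplus H^m(X;\mathbb{Q})_{\Delta^-},$$
identifying $\io^{*}NS_{r+1}(Y;\mathbb{Q})$ with $NS_{r+1}(Y;\mathbb{Q})$.
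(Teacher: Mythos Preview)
This theorem is not proved in the present paper: it is quoted verbatim from \cite{RCMP} (as Theorem~1.2 there) and used as a black box. The paper's own contribution is the integral refinement (Theorem~\ref{Lopez}), and even the proof of the stronger Theorem~\ref{Main}, which subsumes the statement you are trying to prove, begins by \emph{invoking} Theorem~1.2 of \cite{RCMP} rather than re-deriving it. So there is no ``paper's own proof'' to compare against here; what one can compare is your outline with the architecture of \cite{RCMP} as reflected in this paper.

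Your outline is structurally sound, and you correctly identify where the content lies: statements (i) and (ii) are precisely the hard theorems, and everything after ``Granting this'' is indeed elementary linear algebra. But as written this is not a proof: (i) and (ii) are asserted, not established, and the parenthetical description of how one would prove them (``degeneration and blowing-up argument, together with the Borel--Moore description of the cohomology of the complement of $\Delta$'') is too vague to count as an argument. In particular, (i) requires controlling the monodromy of the family of hypersurfaces through $Z$ sharply enough to show that the only invariant vanishing classes are the ones coming from the components of $\Delta$; this is the entire substance of \cite{RCMP}, and the present paper's machinery (Proposition~\ref{contraction}, Theorem~\ref{maintech}, Corollary~\ref{premain}) is devoted to the analogous but harder torsion-free statement. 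Your sketch of (ii) via spreading out is also delicate: one must argue that a monodromy-invariant algebraic class on the very general fibre actually descends to an algebraic class on $Y$, which is not automatic without a careful specialization/relative-cycle argument.

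Methodologically, the route actually taken in \cite{RCMP} and refined here is rather different from your direct Lefschetz-decomposition approach: instead of decomposing each $[Z_i]$ into fixed and vanishing parts, one studies the cokernel of $H_{m+2}(Y)\oplus H_m(\Delta)\to H^m(X)$ via an inductive tower $(Y_l,X_l,W_l,\Delta_l)$, blows up along $W_l$, contracts to a singular $\qc_l$, and applies Lefschetz with singularities together with an irreducibility-of-monodromy result to the torsion-free groups $\V_l$ (see Remark~\ref{finalremarks} and the proof of Theorem~\ref{Main}). Your approach would be more transparent if it worked, but it hides the genuine difficulty inside (i).
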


The aim of this paper is to improve previous Theorem
\ref{RCMPLopez}, showing that:

\begin{theorem}
\label{Lopez}
$$NS_r(X;\mathbb{Z})= \left[NS_r(X;\mathbb{Z})\cap H^m(Y;
\mathbb{Z})\right]\oplus H^m(X; \mathbb{Z})_{\Delta^-}.$$
\end{theorem}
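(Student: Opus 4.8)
The plan is to carry out the whole argument inside the quotient
$V:=H^m(X;\mathbb{Z})/\iota^{*}H^m(Y;\mathbb{Z})$, where $\iota\colon X\hookrightarrow Y$ is the inclusion. Since $X\in|\vc_d|$ is very general, $H^m(X;\mathbb{Z})$ is free and $\iota^{*}\colon H^m(Y;\mathbb{Z})\to H^m(X;\mathbb{Z})$ is injective (Lefschetz), so every subgroup occurring below is free and the projection $\pi\colon H^m(X;\mathbb{Z})\to V$ is defined integrally. Put $F:=NS_r(X;\mathbb{Z})\cap H^m(Y;\mathbb{Z})$ and $M:=H^m(X;\mathbb{Z})_{\Delta^-}$. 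I would reduce the theorem to two statements: \emph{(i)} $\pi|_{M}$ is injective, and \emph{(ii)} $\pi(NS_r(X;\mathbb{Z}))=\pi(M)$. Granting these, for any $\alpha\in NS_r(X;\mathbb{Z})$ there is $\mu\in M$ with $\pi(\alpha)=\pi(\mu)$, whence $\alpha-\mu\in\ker\pi\cap NS_r(X;\mathbb{Z})=F$; together with $F\cap M=\ker(\pi|_M)=0$ this yields $NS_r(X;\mathbb{Z})=F\oplus M$.

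For \emph{(i)} I would use Theorem \ref{RCMPLopez}(1): the $\rho+\sigma$ classes $[Z_1],\dots,[Z_\rho],[R_1],\dots,[R_\sigma]$ form a $\mathbb{Z}$-basis of $H^m(X;\mathbb{Z})_{\Delta}$, so $M$ is spanned by all of them except $[R_\sigma]$ and is a direct summand. As $\Delta$ is a complete intersection, $[\Delta]=\sum_i[Z_i]+\sum_j[R_j]$ is the restriction of an integral class on $Y$, so $\pi[\Delta]=0$. An element $\sum a_i[Z_i]+\sum b_j[R_j]$ of $H^m(X;\mathbb{Z})_{\Delta}$ lies in $\iota^{*}H^m(Y;\mathbb{Z})$ iff it is killed by $\pi$, which by the rank-one computation of \cite{RCMP} forces it to be a rational multiple of $[\Delta]$, i.e. $a_i=b_j=t$ for a common $t$; integrality of the basis coefficients gives $t\in\mathbb{Z}$, so $H^m(X;\mathbb{Z})_{\Delta}\cap\iota^{*}H^m(Y;\mathbb{Z})=\mathbb{Z}[\Delta]$. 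Since $[R_\sigma]$ is a basis vector outside $M$, no nonzero multiple of $[\Delta]=[\Delta^-]+[R_\sigma]$ lies in $M$, whence $M\cap\iota^{*}H^m(Y;\mathbb{Z})=0$, i.e. $\pi|_M$ is injective and $F\cap M=0$. The same relation gives $\pi[R_\sigma]=-\pi[\Delta^-]\in\pi(M)$, so $\pi(M)=\pi(H^m(X;\mathbb{Z})_{\Delta})$.

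Step \emph{(ii)} is the heart of the matter and is where the integral statement surpasses Theorem \ref{RCMPLopez}. Let $U\subset|\vc_d|$ parametrise the smooth hypersurfaces containing $Z$ and let $\Gamma:=\mathrm{Im}\bigl(\pi_1(U)\to\mathrm{Aut}\,H^m(X;\mathbb{Z})\bigr)$ be the monodromy group. Since $Y$ and each $Z_i$ are fixed along $U$, the action of $\Gamma$ fixes $\iota^{*}H^m(Y;\mathbb{Z})$ pointwise, descends to $V$, and fixes each $\pi[Z_i]$. Because $X$ is very general, the global invariant cycle / Noether–Lefschetz argument shows that every class of $NS_r(X;\mathbb{Z})$ is $\Gamma$-invariant, so $\pi(NS_r(X;\mathbb{Z}))\subseteq V^{\Gamma}$; the reverse inclusion $\pi(H^m(X;\mathbb{Z})_{\Delta})\subseteq\pi(NS_r(X;\mathbb{Z}))$ is clear, so it suffices to prove $V^{\Gamma}=\pi(H^m(X;\mathbb{Z})_{\Delta})$. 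Rationally this is precisely the input of \cite{RCMP}, namely $(V\otimes\mathbb{Q})^{\Gamma}=\pi(H^m(X;\mathbb{Q})_{\Delta})$, the monodromy having no invariants on the orthogonal complement of the $\pi[Z_i]$ in the vanishing cohomology.

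The remaining, genuinely delicate, point is to exclude extra invariants integrally: a priori $V^{\Gamma}$ is only a finite-index overgroup of $\pi(H^m(X;\mathbb{Z})_{\Delta})$, and the hard part will be to show this index is $1$. I would attack it via the Picard–Lefschetz formula on the primitive lattice $P:=\bigl(\iota^{*}H^m(Y;\mathbb{Z})\bigr)^{\perp}\subseteq H^m(X;\mathbb{Z})$, which is $\Gamma$-stable and isomorphic to $V$ after tensoring with $\mathbb{Q}$: the group $\Gamma$ is generated by the reflections attached to the vanishing cycles acquired over the boundary of $U$, so $P^{\Gamma}$ is the annihilator in $P$ of the sublattice they generate. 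Identifying that sublattice, up to finite index, with the orthogonal complement of the $\pi[Z_i]$, and exploiting the unimodularity of the cup-product form on $H^m(X;\mathbb{Z})$ to compute annihilators without loss of index, one should obtain exactly $\pi(H^m(X;\mathbb{Z})_{\Delta})$. This self-duality step, replacing the rational irreducibility argument, is the crux; once it is established, combining \emph{(i)} and \emph{(ii)} gives $NS_r(X;\mathbb{Z})=F\oplus M$, as claimed.
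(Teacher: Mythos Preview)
Your reduction to statements (i) and (ii) is sound, and your argument for (i) via Theorem~\ref{RCMPLopez}(1) is correct. The gap is in step~(ii). You correctly locate the crux at the finite index $[V^{\Gamma}:\pi(H^m(X;\mathbb{Z})_{\Delta})]$, but you do not prove it equals~$1$; you only sketch a plan. The sketch does not close: you propose to identify the vanishing-cycle sublattice ``up to finite index'' with the orthogonal complement of the $\pi[Z_i]$, and then invoke unimodularity of the cup product on $H^m(X;\mathbb{Z})$ to remove the index. But ``up to finite index'' is exactly the defect you are trying to kill, and unimodularity on the ambient lattice does not force the vanishing-cycle sublattice to be \emph{saturated} in $P$. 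To make your Picard--Lefschetz argument work you would need to know that the sublattice generated by the vanishing cycles of the restricted family over $U$ (hypersurfaces through $Z$) is primitive in $P$; this is a statement of the same strength as the theorem itself, and you have not supplied it. Moreover, $P$ and $V$ are only rationally isomorphic, so passing conclusions about $P^{\Gamma}$ back to $V^{\Gamma}$ introduces a second index problem that you do not address.

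The paper's route is entirely different and never mentions $V^{\Gamma}$ or the vanishing-cycle lattice. Using the rational statement of Theorem~\ref{RCMPLopez}, it reduces to proving that the cokernel of
\[
H_{m+2}(Y)\oplus H_m(\Delta)\longrightarrow H^m(X)
\]
is torsion-free. This is done by a decreasing induction on an auxiliary index $l$, slicing $Y$ by successive hypersurface sections $Y=Y_0\supset Y_1\supset\cdots\supset Y_{r-1}$ and comparing at each stage two cokernels $\W_l\supseteq\V_l$; one shows $\W_l=\V_l$ using irreducibility of the monodromy on $\V_{l+1}\otimes\mathbb{Q}$ together with torsion-freeness of $\V_l$. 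The torsion-freeness of $\V_l$ is Corollary~\ref{premain}, which in turn rests on Theorem~\ref{maintech}: the injectivity of the Gysin map $H_{m+1}(G)\to H_{m-1}(W)$ for a general $W=X\cap G$. That Gysin injectivity is the main technical theorem of the paper; its proof occupies Section~3 and uses blow-ups along $\Delta$, the contraction described in Proposition~\ref{contraction}, and Lefschetz theorems with singularities. None of this geometric input appears in your proposal, so the work that actually eliminates the torsion obstruction is missing.
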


We would like to stress that even though the main troubles in the
proof of Theorem \ref{Lopez} come from the singularities of
$\Delta $, \textit{ such a result is not trivial even for smooth}
$\Delta $. Indeed, although in this case $\tY:=Bl_{\Delta}(Y)$
would be smooth, the strict transform $\tX:= Bl_{\Delta}(X)$ would
vary in a linear system which is   not  very ample on  $\tY$. In
fact, as it is proved in Proposition \ref{contraction}, this
linear system contracts $\bigcap _{i=1}^r G_i$ to a point.
Therefore, one cannot apply Lefschetz Hyperplane Theorem
directly. Actually, it is our opinion that even for smooth $\Delta $ it
would be difficult to avoid the arguments used in this note.

As explained in the body of the paper, the main technical point in
the proof of Theorem \ref{Lopez} refers to the following
Lefschetz-type problem:
\begin{question}
\label{question} {\it Let $G\subseteq \Proj$ be an irreducible,
smooth projective variety of dimension $m=2r\geq 2$, and fix  a
hypersurface $W\in \vert H^0(G, \mathcal O_{G}(d))\vert$ ($d\geq
1$). To what extent one can assume the Gysin map:
\begin{equation}\label{GysinMapIntro}
H_{m+1}(G;\mathbb{Z})\stackrel{\cap \,u}\longrightarrow H_{m-1}(W;\mathbb{Z})
\end{equation}
to be injective (here $u\in H^2(G, G-W;\mathbb{Z})$ denotes
the orientation class \cite{FultonIT}, $\S
 19.2$)?}
\end{question}
Of course the answer to such a question is trivially affirmative
in many cases. If ${\text{Tor}\,}H_{m+1}(G;\mathbb{Z})=0$ or if we
would work with $\mathbb Q$-coefficients then the Gysin map is
injective by Hard Lefschetz  Theorem. If $W$ is smooth then the
Gysin map is injective by Lefschetz Hyperplane Theorem. However,
it is easy to find examples where the above Gysin map is not
injective, see Example \ref{example}. Unfortunately, in our case
$W$ could be singular. The only way to obtain an interesting
result is to  vary $W$. If the linear system $\mid W\mid$ was very
ample outside its base locus, then we could deduce the injectivity
of (\ref{GysinMapIntro}) from Lefschetz Theorem with
Singularities, see (\cite{GM}, p. 199), and compare with
(\cite{RCMP}, Lemma 3.2). Unfortunately, in our case $\mid W\mid$
may not be very ample outside its base locus. This is the ultimate
reason for which the following Theorem, which is the main
technical result of this paper, has required a major effort.

\begin{theorem}
\label{maintech} Keep notations as in Theorem \ref{RCMPLopez}, set
$G:=G_i$, $m =2r:=\dim_{\mathbb C}G$, and define $W:= G\cap X$
($X\in \vert\vc_d\vert$ is Zariski general containing $Z$). Then
the Gysin map
$$\ks: H_{m+1 }(G; \mathbb{Z})\longrightarrow H_{m-1  }(W; \mathbb{Z})
$$
is injective.
\end{theorem}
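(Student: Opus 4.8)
The plan is to factor the Gysin map through the Poincar\'e duality homomorphism of the (possibly singular) hypersurface $W$, and to concentrate the whole difficulty in the \emph{failure} of that duality. First I would observe that, since $\oc_G(d)$ is the restriction of an ample bundle, $W\in|\oc_G(d)|$ is an ample divisor on $G$; hence its complement $U:=G\setminus W$ is a smooth affine variety of complex dimension $m=2r$, and by the Andreotti--Frankel theorem $H_k(U;\mathbb{Z})=0$ for $k>m$. Feeding the vanishing of $H_{m+1}(U;\mathbb{Z})$ into the long exact sequence of the pair $(G,U)$, together with the Alexander--Lefschetz duality $H_k(G,U;\mathbb{Z})\cong H^{2m-k}(W;\mathbb{Z})$ (\cite{FultonIT}, $\S 19.2$), shows that the natural map $\beta\colon H_{m+1}(G;\mathbb{Z})\to H^{m-1}(W;\mathbb{Z})$ is \emph{injective}. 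Since $\ks=\mathrm{pd}_W\circ\beta$, where $\mathrm{pd}_W\colon H^{m-1}(W;\mathbb{Z})\to H_{m-1}(W;\mathbb{Z})$ is cap product with the fundamental class of $W$, the theorem becomes equivalent to $\beta\bigl(H_{m+1}(G;\mathbb{Z})\bigr)\cap\ker(\mathrm{pd}_W)=0$.

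Next I would pin down the singularities of $W$. For $X$ Zariski general containing $Z$, Bertini forces $W=G\cap X$ to be smooth off the base locus $Z$, while along $Z$ the singular locus is the zero scheme of a section of the rank $r$ bundle $N^{\vee}_{Z/G}$ on the $r$-dimensional $Z$; for general $X$ this section is general, so it vanishes on a finite set and $W$ has at worst \emph{isolated} singularities (ordinary double points). Because the singularities are isolated, the defect of Poincar\'e duality is purely local: on the smooth locus $W^{\circ}:=W\setminus\{p_1,\dots,p_s\}$ the duality $\mathrm{pd}_{W^{\circ}}$ is an isomorphism, and naturality of cap product then gives $\ker(\mathrm{pd}_W)\subseteq\ker\bigl(H^{m-1}(W;\mathbb{Z})\to H^{m-1}(W^{\circ};\mathbb{Z})\bigr)$. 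Since $\beta$ is already injective, it therefore suffices to prove that the composite of $\beta$ with restriction to $W^{\circ}$ stays injective over $\mathbb{Z}$ --- equivalently, that the ambient classes of $H_{m+1}(G;\mathbb{Z})$ survive on a smooth resolution of $W$.

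To build such a resolution without very ampleness I would blow up the base locus: let $\sigma\colon\widetilde{G}:=Bl_Z(G)\to G$, with exceptional divisor $E=\mathbb{P}(N_{Z/G})$, a projective bundle over the smooth $Z$. For general $X$ the strict transform $\widetilde{W}\subset\widetilde{G}$ is smooth and sits in the base-point-free system $\bigl|\sigma^{*}\oc_G(d)\otimes\oc(-E)\bigr|$, defining a morphism $\phi\colon\widetilde{G}\to\mathbb{P}^{N}$. By the argument of Proposition \ref{contraction} this system is \emph{not} very ample: $\phi$ contracts the $(r{+}1)$-dimensional complete intersection $\bigcap_j G_j$ to a point. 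I would thus replace the Lefschetz Hyperplane Theorem by the Goresky--MacPherson Lefschetz theorem with singularities (\cite{GM}, p.~199) applied to $\phi$, to obtain integral injectivity of $H^{m-1}(\widetilde{G};\mathbb{Z})\to H^{m-1}(\widetilde{W};\mathbb{Z})$, and then transport this back along $\sigma$ --- using that the exceptional contribution is carried by the projective bundle $E$, whose cohomology is torsion free and splits off compatibly with the restriction maps --- to recover the injectivity of $\mathrm{res}\circ\beta$, hence of $\ks$.

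The hard part is this last step, and it is hard for two independent reasons. First, the contraction of $\bigcap_j G_j$ places the fibre defect of $\phi$ \emph{exactly} in the critical degree $m-1=\dim_{\mathbb C}\widetilde{W}$, so the naive Goresky--MacPherson bound just fails to reach the degree we need; rescuing injectivity there requires the finer geometry of $\widetilde{W}\cap\bigl(\bigcap_j G_j\bigr)$, the strict transform of $\Delta$, rather than dimension counts alone. Second, the whole argument must run with $\mathbb{Z}$-coefficients: Hard Lefschetz and the decomposition theorem, which would make the matching of ambient and exceptional classes automatic rationally, are unavailable, so the torsion in $H_{m+1}(G;\mathbb{Z})$ must be tracked by hand through the blow-up comparison and the local double-point computation. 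This is precisely where, as noted in the introduction, an argument genuinely beyond (\cite{RCMP}, Lemma~3.2) is forced upon us.
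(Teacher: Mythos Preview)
Your opening move is correct and matches Remark~\ref{unlikely}: the factorization $\ks=\mathrm{pd}_W\circ\beta$, with $\beta$ injective by the affine Lefschetz theorem, reduces everything to controlling $\ker(\mathrm{pd}_W)$. But from that point on your proposal is not a proof; you explicitly label the decisive step as ``the hard part'' and list two obstructions without overcoming either. Both are real. First, you assume $Z$ is smooth in $G$ when you assert that $W$ has only isolated ordinary double points and that $Bl_Z(G)$ is smooth; in Theorem~\ref{RCMPLopez} the subscheme $Z$ is an arbitrary closed subscheme, possibly singular or non-reduced, so neither conclusion is available and your whole blow-up picture may be singular. Second, even granting smoothness, the degree mismatch you flag is genuine: the morphism $\phi$ contracts a locus of complex dimension $r+1$, which lands the defect of \cite{GM} exactly at $H^{m-1}$, so no injectivity in the needed degree follows from Goresky--MacPherson alone. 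Your proposed remedy --- ``the finer geometry of $\widetilde W\cap(\bigcap_jG_j)$'' and ``tracking torsion by hand through the blow-up comparison'' --- is a description of the difficulty, not a resolution of it.

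The paper's proof is organized differently and avoids both problems. It first specializes to $X$ general containing the full complete intersection $\Delta=\bar X\cap\tc$ (with $\tc:=\bigcap_jG_j$), which suffices by semicontinuity; then it removes $S={\rm Sing}\,\Delta$ (of dimension $\le r-1$, so harmless for $H_{m+1}$) and blows up the \emph{smooth} $\Delta^0$ in $G^0$. On the resulting smooth $\widetilde{G}^0$ the linear system $|\widetilde W^0|$ is honestly very ample outside $\widetilde\tc^0$ by Proposition~\ref{contraction}, so Lefschetz with Singularities applies cleanly and shows that any $x\in\ker\ks$ comes from $h_*\colon H_{m+1}(\tc)\to H_{m+1}(G)$ (Proposition~\ref{comesfromtc}). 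The torsion is then killed by a completely separate argument (Proposition~\ref{Tor}): since $\dim{\rm Sing}\,\tc\le r-2$ one has $H_{m+1}(\tc)\cong H^1(\tc\setminus{\rm Sing}\,\tc)$, which is torsion-free by Universal Coefficients, so one may pass to $\mathbb Q$; then an induction on $r$, using Hard Lefschetz on $G$, Milnor's connectivity for the links of the isolated singularities of $G\cap G_j$, and a general hyperplane section, forces any $y\in H_{m+1}(\tc)$ with $h_*(y)$ torsion to vanish. This two-step split --- ``kernel comes from $\tc$'' followed by ``nothing nonzero in $H_{m+1}(\tc)$ maps to torsion'' --- is the idea missing from your sketch, and it is what makes the integral statement go through without any direct torsion bookkeeping on a blow-up.
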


\begin{remark}
\label{cexample} The following example shows that the condition
$d_i\neq d_j$ in Theorem \ref{RCMPLopez} is necessary. Consider
$Y=\mathbb P^5$. Let $G_1$ be a smooth quadric hypersurface, and
let $L_1$ be a general hyperplane section of $G_1$. Let $G_2$ be a
smooth general quadric hypersurface containing $L_1$, so that
$G_1\cap G_2$ is equal to the union of  $L_1$ with another smooth
quadric threefold $L_2$. Let $X$ be a general hypersurface of
degree $d>2$, and define $\Delta:=Z:=X\cap G_1\cap G_2$. Then
$\Delta$ has two irreducible components $\Delta=Z_1\cup Z_2$, with
$Z_i=X\cap L_i$. Now in $H^4(X; \mathbb{Z})$ we have $Z_1=2H^2$,
where $H$ denotes the hyperplane class. Therefore $H^4(X;
\mathbb{Z})_{\Delta}$ is generated by $H^2$, which contradicts Theorem \ref{RCMPLopez}, (1).
\end{remark}

\bigskip

\section{Some basic facts}
\begin{notations}
(i) From now on, unless it is otherwise stated, all  cohomology
and homology  groups are with $\mathbb{Z}$-coefficients.

\medskip
(ii) {\it Borel-Moore homology}. We will denote by $H_i^{BM}(M)$
the {Borel-Moore homology groups} of a variety $M$. Here we recall
some properties of these groups, which will be needed throughout
the paper. \vskip2mm \noindent a)  Borel-Moore homology is equal
to  ordinary homology for any compact variety (\cite{FultonYT}, p.
217, line 7 from below). \vskip3mm \noindent b) If $U$ is open in
$M$, and $C$ is the complement of $U$ in $M$, then there is a long
exact sequence
\begin{equation}\label{exsq}
\dots \to H_{i+1}^{BM}(U) \to H_i^{BM}(C) \to H_i^{BM}(M) \to
H_i^{BM}(U) \to H_{i-1}^{BM}(C) \to \dots
\end{equation}
(\cite{FultonYT}, Lemma 3, p. 219).
\vskip3mm
\noindent
c) If $M$
is smooth of complex dimension $m$, then there is a natural
isomorphism
\begin{equation}\label{smooth}
H_i^{BM}(M)\cong H^{2m-i}(M)
\end{equation}
(\cite{FultonYT}, (26), p. 217).
\end{notations}

\begin{example}
\label{example} Denote by $T$ an irreducible, projective, smooth
threefold such that ${\text{Tor}\,} H_3(T)\not =0$. Choose a
torsion class $0\not= c\in {\text{Tor}\,}H_3(T)$ and assume  that $l\cdot
c=0$ for some $l\in\mathbb{Z}$ with $l>0$. Define
$$S:=T\times \Proj ^{r-1} \subset G:= T\times \Proj ^{m-3}\subset \Proj, \hskip3mm 2r=m\geq 8,$$
and choose a general $W\in \vert H^0(G,\ic_{S,G}(kl))\vert$, $k\gg
0$. From $\dim S={\text{codim}\,} S+4$ it follows that the hypersurface $W$
gives rise to a section of the normal bundle $\nc_{S,G}(kl)$
which vanishes in dimension four. Therefore, we have $\dim
{\text{Sing}\,}W=4$. Consider the cycle
$$\gamma :=c\otimes [\Proj ^{r-1}]\in H_{m+1}(S),$$
and let $\gamma'$ be the image of $\gamma$ in $H_{m+1}(G)$,
via push-forward. Notice that $\gamma'\neq 0$. From the commutative diagram
$$
\begin{array}{ccccc}
\gamma & \in & H_{m+1}(S) & \longrightarrow & H_{m+1}(G) \\
\downarrow & & \downarrow & & \downarrow \\
\gamma \cap kl[H] & \in &   H_{m-1}(S)& \longrightarrow &  H_{m-1}(W),\\
\end{array}
$$
where $[H]\in H^2(S)$ denotes the hyperplane class, it follows
that the image of $\gamma'$ in $H_{m-1}(W)$ vanishes. Hence the map $H_{m+1}(G)\to H_{m-1}(W)$
provides an example of Gysin map, which is not injective.
\end{example}

\medskip

\begin{remark}
\label{unlikely} As we have just observed, in the examples above
$\dim {\text{Sing}\,}W=4$. We do not know examples of not
injective Gysin maps for hypersurfaces with  isolated
singularities. Keeping
notations as in Theorem \ref{RCMPLopez}, isolated
singularities  appear for instance when  we define $W=G\cap
{\overline{X}}$, $G=G_i$ (\cite{Vogel}, Proposition 4.2.6 and
proof, p. 133). Nevertheless,  even in the case $\dim {\text{Sing}\,}W=0$ it seems
unlikely that Gysin map must be always  injective. Indeed, assume
$\dim {\text{Sing}\,}W=0$ and define
$$\Gamma:= {\text{Sing}\,}W=\{x_1,\dots, x_s\},\hskip2mm W':=W  -\Gamma.$$
Using (\ref{exsq}) and (\ref{smooth}) we have an isomorphism for $m>2$:
$$H_{m-1}(W)\cong H_{m-1}^{BM}(W')\cong
H^{m-1}(W').$$
Consider the
cohomology long exact sequence
$$
\dots\longrightarrow H^{m-1}(W, W') \longrightarrow  H^{m-1}(W)
\longrightarrow H^{m-1}(W') \longrightarrow\dots.
$$
Choose a small ball $S_j\subset G$ around   each $x_j$, and set
$B_j:=S_j\cap W$ and $B_j^0:=B_j-\{x_j\}$. By excision, we have
$$H^{m-1}(W, W')\cong \bigoplus_{j=1}^s H^{m-1}(B_j,B_j^0).
$$
By (\cite{Dimca1}, p. 245), we have
$$  H^{m-1}(B_j,B_j^0)\cong  H^{m-2}(K_j),
$$
where $K_j$ denotes the link of the singularity $x_j$.
By Milnor's Theorem
(\cite{Dimca1}, Theorem 3.2.1, p.76),  the link is
 $(m-3)$-connected. Hence one cannot expect the last
group vanishes. And in fact, when $m=2r$ is even,  for a node and
more generally for an ordinary singularity one has
$H^{m-2}(K_j)\neq 0$. Summing up, we have
\begin{equation}\label{diagram}
\begin{array}{ccccc}
 \bigoplus_{j=1}^s H^{m-2}(K_j) &\longrightarrow &  H^{m-1}(W) & \longrightarrow & H_{m-1}(W) \\
& & \uparrow &\nearrow   &  \\
& & H_{m+1}(G) \cong H^{m-1}(G). &   & \\
\end{array}
\end{equation}
Although the vertical arrow is injective by Lefschetz Hyperplane
Theorem, it seems unlikely  that the oblique one, i.e. the
Gysin map, must be injective   for any $W$. However, we remark
that for certain very special isolated singularities one knows that
$H^{m-2}(K_j)=0$ (\cite{Dimca1}, Proposition 4.7, p.
93, Theorem 4.10, p. 94). Finally, one can infer the injectivity of the Gysin map also when ${\rm{rk}}\, H^{m-2}(W)={\rm{rk}}\, H_{m}(W)$. Indeed, in this case
  the
exact sequence
$$
0\to H^{m-2}(W)\to H_{m}(W)\to\bigoplus_{j=1}^s H^{m-2}(K_j) \to
H^{m-1}(W) \to H_{m-1}(W)
$$
shows that the map $\bigoplus_{j=1}^s H^{m-2}(K_j)\to  H^{m-1}(W)$ is
injective, because  $H^{m-2}(K_j)$ is
torsion free (\cite{Dimca1}, (4.1) and (4.2), p. 91). By (\ref{diagram}), this implies that the Gysin map $H_{m+1
}(G)\to H_{m-1 }(W)$ is injective, because its kernel is a torsion
group by Hard Lefschetz Theorem.
\end{remark}

\begin{notations}\label{LerayHirsch}
Consider a smooth quasi-projective  variety $Y$ of dimension $n$
and a locally free sheaf $\mathcal E$ of rank $r$ on $Y$. Set
$\V:=\Proj ( \mathcal E)$, denote by $\pi: \V\to Y$ the natural projection
and denote by $c:=c_1(\oc _{\V}(1))\in A^1(\V)$ the first Chern class. The
cycle map (\cite{FultonIT}, p.370) sends $A^i(\V )$ into the
Borel-Moore homology group $H^{BM}_{2(n+r-1-i)}(\V)$, which can be
identified with $H^{2i}(\V )$, see (\ref{smooth}). Denote by
$\xi_i\in H^{2i}(\V )$ the cohomology class corresponding to
$c^i\in A^i(\V )$. By the Leray-Hirsch Theorem, we have an
isomorphism for any fixed integer $m$:
$$\phi=\oplus_{i=0}^{r-1}\phi_i:\oplus_{i=0}^{r-1} H^{m-2i}(Y)\to H^m(\V ), \hskip3mm \phi_i(\cdot )=\pi ^*(\cdot )\cup
 \xi_i.$$
\end{notations}

Now we are going to prove that the Leray-Hirsch Theorem holds true also for Borel-Moore homology
groups. The following Lemma is certainly well-known, but we briefly
prove  it for lack of a suitable reference.

\begin{lemma}
\label{LerayHirsch1}
We have an isomorphism of Borel-Moore homology groups:
$$\psi=\oplus_{i=0}^{r-1}\psi_i:H^{BM}_m(\V )\to \oplus_{i=0}^{r-1} H_{m-2i}^{BM}(Y),
\hskip3mm\psi_i(\cdot )=\pi_*(\cdot \cap \xi_i).$$
\end{lemma}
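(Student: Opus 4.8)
The plan is to reduce the statement to the cohomological Leray--Hirsch isomorphism $\phi$ of Notations \ref{LerayHirsch} by means of the Poincar\'e--Lefschetz duality isomorphism (\ref{smooth}). Both $Y$ (of dimension $n$) and $\V=\Proj(\mathcal E)$ (of dimension $N:=n+r-1$) are smooth and quasi-projective, so (\ref{smooth}) gives $H^{BM}_m(\V)\cong H^{2N-m}(\V)$ and $H^{BM}_{m-2i}(Y)\cong H^{2n-m+2i}(Y)$. Setting $k:=2N-m$ and reindexing the target summands by $j:=r-1-i$, one checks that $2n-m+2i=k-2j$, so that $\oplus_{i=0}^{r-1}H^{BM}_{m-2i}(Y)$ is identified with the source $\oplus_{j=0}^{r-1}H^{k-2j}(Y)$ of $\phi$, while $H^{BM}_m(\V)$ is identified with its target $H^k(\V)$. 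Thus $\psi$ will be an isomorphism once we show that, under these identifications, it corresponds to a cohomological map that is invertible; since $\phi\colon \oplus_{j}H^{k-2j}(Y)\to H^k(\V)$ is already known to be an isomorphism, it suffices to compare $\psi$ with $\phi$.

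First I would check that the duality isomorphism (\ref{smooth}) intertwines the homological operation $\psi_i(\cdot)=\pi_\star(\cdot\cap\xi_i)$ with the cohomological Gysin-type operation $\Psi_i(\cdot):=\pi_!(\cdot\cup\xi_i)$. This rests on two standard compatibilities, which I would record citing \cite{FultonIT}: the cap product with a class $\xi_i\in H^{2i}(\V)$ on Borel--Moore homology corresponds under (\ref{smooth}) to the cup product with $\xi_i$ on cohomology, and the proper push-forward $\pi_\star$ on Borel--Moore homology corresponds to the cohomological Gysin map $\pi_!$ (a degree $-2(r-1)$ map, as $\pi$ is projective of relative dimension $r-1$). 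A bookkeeping of degrees confirms that $\Psi_i$ lands in $H^{2n-m+2i}(Y)=H^{k-2(r-1-i)}(Y)$, as required, so that $\psi$ is identified with $\Psi:=\oplus_i\Psi_i\colon H^k(\V)\to\oplus_{j}H^{k-2j}(Y)$.

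It then remains to prove that $\Psi$ is an isomorphism, and for this I would compute the composite $\Psi\circ\phi$. Using $\xi_{j'}\cup\xi_i=\xi_{j'+i}$ (both sides being $c^{j'+i}$) together with the projection formula and the defining property of Segre classes $\pi_!(\xi_l)=\pi_!\bigl(c_1(\oc_{\V}(1))^l\bigr)=s_{l-(r-1)}(\mathcal E)$ (with $s_0=1$ and $s_{<0}=0$, see \cite{FultonIT}, $\S 3$), one finds that the $(i,j')$-entry of the matrix representing $\Psi\circ\phi$ is cup product with $s_{i+j'-(r-1)}(\mathcal E)$. This entry vanishes for $i+j'<r-1$ and equals the identity on the anti-diagonal $i+j'=r-1$, so the matrix is anti-triangular with units along the anti-diagonal; reversing one index turns it into a unipotent triangular operator, which is invertible. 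Hence $\Psi\circ\phi$ is an isomorphism, and since $\phi$ is an isomorphism, so is $\Psi=(\Psi\circ\phi)\circ\phi^{-1}$, and therefore $\psi$.

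I expect the main obstacle to lie not in the formal Segre-class computation, which is routine once set up, but in Step two: pinning down precisely that the duality isomorphism (\ref{smooth}) intertwines $\cap\,\xi_i$ with $\cup\,\xi_i$ and $\pi_\star$ with $\pi_!$. These are standard facts in Fulton's formalism, but they must be invoked with care so that the homological map $\psi$ is faithfully translated into the cohomological map $\Psi$; everything else then follows formally from the already available cohomological Leray--Hirsch isomorphism.
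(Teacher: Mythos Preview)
Your argument is correct, and it takes a genuinely different route from the paper's. The paper proceeds sheaf-theoretically: it invokes the splitting $R\pi_*\mathbb{Z}_{\V}\cong\bigoplus_{i=0}^{r-1}\mathbb{Z}_Y[-2i]$ in the derived category (from Voisin's proof of the cohomological Leray--Hirsch theorem), applies $R^{\bullet}\Gamma_c$, and then Verdier-dualizes to obtain the Borel--Moore statement. Your approach instead stays at the level of (co)homology groups: you transport $\psi$ through Poincar\'e--Lefschetz duality (\ref{smooth}) to a cohomological map $\Psi$, and then verify invertibility by computing the matrix of $\Psi\circ\phi$ explicitly in terms of Segre classes of $\mathcal E$, obtaining an anti-triangular matrix with identities on the anti-diagonal. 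The paper's argument is shorter and more conceptual once one accepts the derived-category machinery, but is terse about why the resulting isomorphism is precisely the map $\psi$ of the statement; your argument is more elementary (no derived categories, only standard duality compatibilities from \cite{FultonIT}) and has the advantage of making the inverse of $\psi$ essentially explicit. The only point to handle with care, as you already flag, is the identification of $\pi_*$ on Borel--Moore homology with $\pi_!$ on cohomology under (\ref{smooth}); since $\pi$ is projective and both $\V$ and $Y$ are smooth, this is standard.
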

\begin{proof}
As explained in (\cite{Voisin}, Proof of the Leray-Hirsch Theorem, p. 195),
we have an isomorphism in the derived  category $D^*(A_Y)$, notations as in \cite{Dimca2}:
$$\pi_*\mathbb{Z}_{\V }\cong \bigoplus_{i=0}^{r-1}\mathbb{Z}_Y[-2i].
$$
In order to prove the Lemma it suffices to apply the derived
functor $R^{\bullet}\Gamma_c$ to the isomorphism above  and then
take the dual:
$$R^{\bullet}\Gamma_c(\V,\mathbb{Z})\cong  \bigoplus_{i=0}^{r-1}R^{\bullet}\Gamma_c(Y,\mathbb{Z})[-2i],
 \hskip5mm  DR^{\bullet}\Gamma_c(\V,\mathbb{Z})\cong \bigoplus_{i=0}^{r-1}DR^{\bullet}\Gamma_c(Y,\mathbb{Z})[2i].$$
Compare with (\cite{Iversen}, p.374), and use notations as in (\cite{Iversen}, pp. 374-78).
\end{proof}

\medskip

\begin{remark}\label{unisecant}
$(i)$ In the statement of the Leray-Hirsch Theorem the cohomology
classes $\xi_i$ are defined up to classes in $\pi ^* (H^{2i
}(Y))$, hence $\xi_{r-1}$ could be replaced by the cycle class of
any unisecant in $A_n(\V )$.

$(ii)$ Notice that $\pi$ is a local complete
intersection  (l.c.i. for short) morphism \cite{FultonIT}.
Set $M_m:= \ker\left(\oplus_{i=0}^{r-2}\psi_i\right)$. Then
$\psi_{r-1}:M_m\rightarrow H_{m-2r+2}^{BM}(Y)$ is an isomorphism
with inverse  the Gysin map
\begin{equation}
\label{Gysin}
\pi_{\star}: H_{m-2r+2}^{BM}(Y)\to M_{m}\subset H_{m}^{BM}(\V ),
\end{equation}
which represents the tensor product with the fundamental class of
the fiber of $\pi: \V \to Y$. Compare with (\cite{FultonIT},
Example 19.2.1, p. 382), and  with the proof of Theorem 8 in
(\cite{Spanier}, Theorem 8, p. 258).
\end{remark}

\begin{notations}
Choose a section in $H^0(Y,\mathcal E)$, and assume it vanishes on
a subscheme $D\subset Y$ having the right codimension. Then we
have a surjection
$$\mathcal E^{\vee}\longrightarrow  \ic_{D,Y}\longrightarrow 0.$$
This surjection induces an imbedding $\widetilde{Y}:=Bl_{D}(Y)\subset
\V$. Since the natural projection $\pi^Y:= \pi \mid _{\tY}: \tY
\longrightarrow Y$ is a  l.c.i. morphism of codimension $0$, it follows that there exists  a Gysin map
(\cite{FultonIT}, Example 19.2.1, p. 382):
$$\jmath_{\star }:H^{BM}_{\bullet}(Y)\to H^{BM}_{\bullet}(\widetilde{Y}).$$
\end{notations}

\medskip

\begin{theorem}
\label{injective} With notations as above we have: $$\pi^Y_{*}
\circ \jmath_{\star }= \mathrm{id}:H^{BM}_{\bullet}(Y)\to
H^{BM}_{\bullet}({Y}),$$ in particular $\jmath_{\star }$ is
injective.
\end{theorem}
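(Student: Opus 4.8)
The plan is to show that $\jmath_\star$ is a one-sided inverse (a section) of the proper pushforward $\pi^Y_*$, exploiting that $\pi^Y:\tY\to Y$ is a proper, birational l.c.i.\ morphism onto a \emph{smooth} base, together with the fact that Gysin maps of l.c.i.\ morphisms are compatible with cap products by cohomology classes. First I would record the structural facts I need: since $\V=\Proj(\mathcal E)$ is a projective bundle over $Y$ and $\tY\subset\V$ is closed, the map $\pi^Y$ is proper; since $\tY=Bl_{D}(Y)$ it is birational, an isomorphism over the dense open set $Y\setminus D$; and by hypothesis it is l.c.i.\ of codimension $0$ (equivalently, of relative dimension $0$), so that $\jmath_\star=(\pi^Y)^{!}$ preserves the homological degree and is defined through (\cite{FultonIT}, Example 19.2.1).

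The key intermediate step is to compute the effect of $\jmath_\star$ on the fundamental class. Because $\pi^Y$ is a birational l.c.i.\ morphism of relative dimension $0$ onto the $n$-dimensional smooth variety $Y$ ($n:=\dim Y$), the Gysin image $(\pi^Y)^{!}[Y]\in H^{BM}_{2n}(\tY)$ is a top-dimensional class which restricts to the fundamental class over $Y\setminus D$. Since $H^{BM}_{2n}(\tY)$ is freely generated by the fundamental classes of the $n$-dimensional components of $\tY$, this forces
$$\jmath_\star[Y]=(\pi^Y)^{!}[Y]=[\tY].$$
This normalization is the one point that genuinely requires care: $\tY$ need not be smooth when the center $D$ is singular, so one cannot simply invoke Poincaré duality on $\tY$. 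The argument above, however, uses only that $Y$ is smooth and that $\pi^Y$ is generically an isomorphism, so it goes through regardless of the singularities of $\tY$. I expect this fundamental-class identity to be the main obstacle of the proof.

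With this in hand, smoothness of $Y$ reduces the general statement to the fundamental class. For smooth $Y$ of dimension $n$, capping with $[Y]$ is the Poincaré duality isomorphism $H^{2n-j}(Y)\stackrel{\cap[Y]}{\longrightarrow}H^{BM}_{j}(Y)$, see (\ref{smooth}); hence any $\alpha\in H^{BM}_{j}(Y)$ can be written $\alpha=u\cap[Y]$ for a unique $u\in H^{2n-j}(Y)$. Since the refined Gysin map of an l.c.i.\ morphism commutes with cap products against pulled-back cohomology classes (the module-map property, \cite{FultonIT}, \S\,19), I obtain
$$\jmath_\star\alpha=(\pi^Y)^{!}(u\cap[Y])=(\pi^Y)^{*}u\cap(\pi^Y)^{!}[Y]=(\pi^Y)^{*}u\cap[\tY].$$
Applying $\pi^Y_*$, and using the projection formula for the proper map $\pi^Y$ together with $\pi^Y_*[\tY]=[Y]$ (birationality), I conclude
$$\pi^Y_*\jmath_\star\alpha=\pi^Y_*\big((\pi^Y)^{*}u\cap[\tY]\big)=u\cap\pi^Y_*[\tY]=u\cap[Y]=\alpha,$$
which is the desired identity $\pi^Y_*\circ\jmath_\star=\mathrm{id}$; the injectivity of $\jmath_\star$ follows at once. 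If one wished to sidestep the fundamental-class identity in the possibly singular $\tY$, an alternative and more computational route would factor $\jmath_\star$ through the regular embedding $\tY\hookrightarrow\V$ and the bundle projection, and then read off the conclusion from the Borel--Moore Leray--Hirsch decomposition of Lemma \ref{LerayHirsch1}, using that $[\tY]$ is a unisecant in the sense of Remark \ref{unisecant}.
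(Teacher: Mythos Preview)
Your argument is correct and takes a genuinely different route from the paper. The paper factors the l.c.i.\ morphism $\pi^Y:\tY\to Y$ through the projective bundle $\V=\Proj(\mathcal E)$ as $\tY\stackrel{f}{\hookrightarrow}\V\stackrel{\pi}{\to}Y$, writes $\jmath_\star(x)=\pi_\star(x)\cap u_{\tY}$ from the very definition of the Gysin map, observes that $f_*(\,\cdot\,\cap u_{\tY})=\,\cdot\,\cap\xi_{r-1}$ because $\tY$ is unisecant (Remark~\ref{unisecant}), and then reads off $\pi^Y_*\jmath_\star(x)=\psi_{r-1}(\pi_\star(x))=x$ directly from the Borel--Moore Leray--Hirsch decomposition of Lemma~\ref{LerayHirsch1}. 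In other words, the paper's proof is exactly the ``alternative and more computational route'' you sketch in your last sentence.

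Your main argument, by contrast, never touches $\V$ or Leray--Hirsch: it rests on the normalization $\jmath_\star[Y]=[\tY]$ (forced because $\pi^Y$ is a birational l.c.i.\ morphism of relative dimension $0$ and $H^{BM}_{2n}(\tY)$ is generated by $[\tY]$), Poincar\'e duality on the smooth base $Y$, the module property of l.c.i.\ Gysin maps, and the projection formula. This is shorter and more conceptual, and it makes transparent that the only genuine input is the smoothness of $Y$. The paper's approach, on the other hand, is tailored to the ambient structure actually used later (the projective-bundle embedding of the blow-up), and its reliance on Lemma~\ref{LerayHirsch1} is what motivates proving that lemma in the first place; your argument would render Lemma~\ref{LerayHirsch1} unnecessary for this particular theorem.
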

\begin{proof}
We denote by $f:\widetilde{Y}\to \V$ the inclusion morphism. Applying
(\ref{Gysin}) to
 the definition of Gysin map (\cite{FultonIT}, Example 19.2.1, p. 382)  we have:
\begin{equation}\label{defGysin}
\jmath_{\star }(x)= \pi_{\star}(x)\cap u_{\widetilde{Y}},
\hskip3mm \forall x\in H^{BM}_m({Y}).
\end{equation}
Here $u_{\widetilde{Y}}$ denotes the orientation  class of
$\widetilde{Y}$ in $\V$ (\cite{FultonIT},  p. 372), so that:
$$\cap \, u_{\widetilde{Y}}: H^{BM}_{\bullet}(\V)\longrightarrow H^{BM}_{\bullet-2r+2}(\tY).$$
\par\noindent
Since $\widetilde{Y}$ is unisecant in $\V$,  Remark
\ref{unisecant}, $(i)$, implies that we may assume $\xi_{r-1}$ to be the cycle
class of $\widetilde{Y}$.  We thus get
$$ f_*(\cdot \cap \,u_{\widetilde{Y}})=\cdot \cap \xi_{r-1}.
$$
According to (\ref{defGysin}), we have
\begin{equation}\label{Y=xi}
f_*(\jmath_{\star }(x))= f_*(\pi_{\star}(x)\cap
u_{\widetilde{Y}})=\pi_{\star}(x)\cap \xi_{r-1}, \hskip3mm \forall
x\in H^{BM}_m({Y}).
\end{equation}
Using (\ref{Y=xi}), Lemma \ref{LerayHirsch1} and Remark
\ref{unisecant}, $(ii)$, we may conclude
$$(\pi^Y_{*} \circ \jmath_{\star })(x)=\pi_{*}(f_*(\jmath_{\star }(x)))=\pi_*(\pi_{\star}(x)\cap \xi_{r-1})=
\psi_{r-1}\circ \pi_{\star}(x)=x,
$$
for any $ x\in H^{BM}_{\bullet}(Y)$.
\end{proof}

\begin{remark}
Consider a quasi-projective smooth variety $Y$  and a complete
intersection $\Delta = \bigcap_{i=1}^r X_i$, $X_i \in \vert H^0(Y,
\oc_Y(d_i))\vert$. Fix $i_0$, set $X:=X_{i_0}$, and assume that $X$ is
smooth. Applying Theorem \ref{injective} to $Y$ and $\mathcal
E=\oplus_{i=1}^r \oc_Y(d_i)$, and to $X$ and $\mathcal
E=\oplus_{i=1,i\neq i_0}^r \oc_X(d_i)$, we see that the Gysin maps
are injective:
$$\jmath_{\star }:H^{BM}_{\bullet}(Y)\hookrightarrow H^{BM}_{\bullet}(\widetilde{Y}), \hskip2mm \tY:=Bl_{\Delta}(Y),$$
$$\imath_{\star }:H^{BM}_{\bullet}(X)\hookrightarrow H^{BM}_{\bullet}(\widetilde{X}), \hskip2mm \tX:=Bl_{\Delta}(X).$$
Notice that $\tX\subset\tY$ (\cite{FultonIT}, B.6.9, p.436), and
that $\tX$ is a Cartier divisor on $\tY$, for $\Delta $ is
regularly imbedded in both $X$ and $Y$ (\cite{FultonIT}, B.6.10,
p.437).
\end{remark}

\begin{lemma}
\label{commute} Denote by $\iota^X:X\to Y$ and
$\iota^{\widetilde{X}}:\widetilde{X}\to \widetilde{Y}$ the
inclusions. Then the following diagram of Gysin maps is commutative:
$$
\begin{array}{ccc}
 H^{BM}_{\bullet}(Y) & \stackrel{\jmath_{\star }}{\hookrightarrow} & H^{BM}_{\bullet}(\widetilde{Y}) \\
\stackrel{\iota^X_\star}{}\downarrow & & \downarrow \,\stackrel{\iota^{\widetilde{X}}_{\star}}{}\\
H^{BM}_{\bullet -2}(X)& \stackrel{\imath_{\star }}{\hookrightarrow} &  H^{BM}_{\bullet -2}(\widetilde{X}).\\
\end{array}
$$
\end{lemma}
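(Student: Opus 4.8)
The plan is to recognize all four arrows of the diagram as Gysin homomorphisms of local complete intersection (l.c.i.) morphisms and to deduce the commutativity purely from the functoriality of such maps under composition. Write $\pi^Y:\tY\to Y$ and $\pi^X:\tX\to X$ for the two blow-down morphisms, so that $\pi^X=\pi^Y\vert_{\tX}$. By construction, $\jmath_\star=(\pi^Y)^!$, $\imath_\star=(\pi^X)^!$, $\iota^X_\star=(\iota^X)^!$ and $\iota^{\tX}_\star=(\iota^{\tX})^!$ are the refined Gysin maps attached respectively to $\pi^Y$, $\pi^X$, $\iota^X$ and $\iota^{\tX}$, all in the sense of (\cite{FultonIT}, \S 19.2). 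These morphisms sit in the commutative square
$$
\begin{array}{ccc}
\tX & \stackrel{\iota^{\tX}}{\longrightarrow} & \tY \\
\pi^X\downarrow & & \downarrow\pi^Y \\
X & \stackrel{\iota^X}{\longrightarrow} & Y,
\end{array}
$$
whose commutativity is immediate from $\pi^X=\pi^Y\vert_{\tX}$. The assertion of the Lemma is exactly the identity $(\iota^{\tX})^!\circ(\pi^Y)^!=(\pi^X)^!\circ(\iota^X)^!$ of maps $H^{BM}_\bullet(Y)\to H^{BM}_{\bullet-2}(\tX)$.

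First I would check that each of the four morphisms is l.c.i. and hence carries a well-defined Gysin map. The projections $\pi^Y$ and $\pi^X$ are l.c.i. of codimension $0$, being blow-ups along the regularly imbedded centre $\Delta$ (equivalently, restrictions of the projection $\pi:\V\to Y$ used in the Notations preceding Theorem \ref{injective}). The inclusion $\iota^X$ is a regular imbedding of codimension $1$, since $X$ is a smooth, hence Cartier, divisor on $Y$; and $\iota^{\tX}$ is a regular imbedding of codimension $1$ because, by the preceding Remark, $\tX$ is a Cartier divisor on $\tY$. In particular all four Gysin maps are defined, the horizontal ones lowering Borel-Moore degree by $2$ and the vertical ones preserving it.

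The key step is then to invoke the functoriality of Gysin homomorphisms for composable l.c.i. morphisms, $(g\circ f)^!=f^!\circ g^!$ (\cite{FultonIT}, Theorem 6.5, and its homological counterpart in \S 19.2). Set $p:=\pi^Y\circ\iota^{\tX}=\iota^X\circ\pi^X:\tX\to Y$, an l.c.i. morphism of codimension $1$. Computing $p^!$ along the two factorizations of $p$ afforded by the square gives
$$
(\iota^{\tX})^!\circ(\pi^Y)^!=p^!=(\pi^X)^!\circ(\iota^X)^!,
$$
which is precisely the desired commutativity. For this argument I only need the two factorizations of the single morphism $p$ to consist of l.c.i. morphisms; I do not need the square to be a fibre square, nor any Tor-independence or base-change hypothesis. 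The one point deserving care—and the main, if modest, obstacle—is verifying that the square is genuinely a diagram of l.c.i. morphisms with the expected virtual normal data, so that Fulton's unconditional functoriality theorem applies; this rests on the regular imbeddings recorded in the preceding Remark together with the l.c.i. nature of the blow-down maps already exploited in Theorem \ref{injective}.
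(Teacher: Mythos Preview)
Your argument is correct and coincides with the paper's own proof: both observe that the two composites $\iota^X\circ\pi^X$ and $\pi^Y\circ\iota^{\tX}$ define the same l.c.i. morphism $\tX\to Y$, verify that each factor is l.c.i., and then invoke functoriality of the Gysin map (\cite{FultonIT}, Example 19.2.1) to conclude. Your write-up is somewhat more explicit in checking the l.c.i. hypotheses, but the strategy is identical.
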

\begin{proof}
The natural maps $\widetilde{X}\stackrel{\pi^X}\longrightarrow X
\stackrel{\iota^X}\longrightarrow Y$ and $
\widetilde{X}\stackrel{\iota^{\widetilde{X}}}\longrightarrow
\widetilde{Y} \stackrel{\pi^Y}\longrightarrow Y$ are equal. Furthermore, they are
l.c.i. maps because they are both composite of l.c.i. maps. Therefore, by
functoriality of the Gysin morphism (\cite{FultonIT}, Example
19.2.1, p. 382), we have:
$$\imath_{\star}\circ
\iota^X_{\star}=\iota^{\widetilde{X}}_{\star}\circ \jmath_{\star}.
$$
\end{proof}

\medskip

\begin{notations}
\label{notationscontraction}  Let $Y\subset \Proj $ be a possibly
singular quasi-projective variety, and set
$\vc_d:={\rm{Im}}(H^0(\Proj,\oc_{\Proj}(d))\to  H^0(Y,\oc_Y(d)))$.
Consider a complete intersection $\Delta = \bigcap_1^r X_i$, $X_i
\in\vert\vc_{d_i}\vert$, with $ d:= d_1 \geq d_{2}\geq  d_{3} \geq
\dots \geq d_r$. Fix a hypersurface
$X\in\vert\vc_{\Delta, d}\vert$, where $\vc_{\Delta, d}:=\vc_d\cap
H^0(Y,\ic_{\Delta,Y}(d))$. Then we have
$$\tX:=Bl_{\Delta}(X)\subset Bl_{\Delta}(Y)=:\tY$$
(\cite{FultonIT}, B.6.9, p.436). Since $\Delta $ is regularly
imbedded in both $X$ and $Y$, it follows   that $\tX$ is a Cartier
divisor on $\tY$ (\cite{FultonIT}, B.6.10, p.437). More precisely
$\tX\in \vert H^0(\oc_{\tY}(d\tH -\widetilde{\Delta}))\vert$,
where $\oc_{\tY}(\tH)$ denotes the pull-back of $\oc_{Y}(1)$ via
the natural projection $\tY\to Y$, and $\widetilde{\Delta}$ denotes the
exceptional divisor in $\tY$. Since $\ic_{\Delta, Y}(d)$ is
globally generated, by letting $X\in \vert\vc_{\Delta, d}\vert$
vary, we have a base point free linear system $\vert \tX \vert$ on
$\tY$ and a morphism
$$\nu : \widetilde{Y}\rightarrow \Proj'= \Proj (\vc_{\Delta, d}^*),\quad \qc:=\nu(\widetilde Y).
$$
\end{notations}

\begin{proposition}
\label{contraction} Assume moreover that $ d > d_{2}$ and set $\tc
:= \bigcap _{i=2}^{r }X_i$. Then we have:
\begin{enumerate}
\item $\tc \cong \widetilde{\tc}:=Bl_{\Delta}(\tc)\subset \widetilde{Y}$, $\widetilde\tc\cap \widetilde{X}= \emptyset$,
hence the morphism $\nu $ sends $\widetilde\tc  $ to a point
$p\in \qc$;
\item the morphism $\nu$ is an isomorphism outside $\widetilde\tc$, namely $\vert\tX\vert$ is very ample on
$\tY - \widetilde\tc$:
$$\nu : \widetilde{Y}-\widetilde\tc \cong \qc - \{p\}.$$
\end{enumerate}
\end{proposition}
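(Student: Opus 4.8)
The plan is to prove the two assertions in turn, deducing (1) from the fact that $\Delta$ is a Cartier divisor on $\tc$, and (2) from an explicit description of the generators of $\vc_{\Delta,d}$. For (1), first I would observe that $\Delta=X_1\cap\tc$ is cut out on $\tc$ by the single section $X_1\in H^0(\oc_Y(d_1))$ and has codimension $1$ there, hence is a Cartier divisor on $\tc$. Blowing up a Cartier divisor is an isomorphism, so $\widetilde{\tc}:=Bl_{\Delta}(\tc)\cong\tc$, and under $\widetilde{\tc}\subset\tY$ the exceptional divisor restricts to the divisor $\tD|_{\widetilde{\tc}}=\Delta\subset\tc$, of class $d_1\,H|_{\tc}=d\,H|_{\tc}$ (recall $d=d_1$). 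I would then compute the restriction of the line bundle defining $\nu$:
$$\oc_{\tY}(\tX)|_{\widetilde{\tc}}=(d\tH-\tD)|_{\widetilde{\tc}}=d\,H|_{\tc}-d\,H|_{\tc}=\oc_{\widetilde{\tc}}.$$
Since $\widetilde{\tc}$ is connected and projective, a trivial bundle forces $\nu$ to be constant on $\widetilde{\tc}$, so $\nu(\widetilde{\tc})=\{p\}$. For the disjointness, $\tX|_{\widetilde{\tc}}$ is an effective divisor in the trivial class above; as $\tc\not\subseteq X$ (indeed $X\cap\tc$ and $\Delta$ are effective divisors on $\tc$ of the same class $d\,H|_{\tc}$ with $\Delta\subseteq X\cap\tc$, hence equal), we get $\widetilde{\tc}\not\subseteq\tX$ and therefore $\tX\cap\widetilde{\tc}=\emptyset$.

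For (2) the key input is that, $\ic_{\Delta,Y}$ being generated by the regular sequence $X_1,\dots,X_r$, the space $\vc_{\Delta,d}$ contains $\langle X_1\rangle+\sum_{i=2}^{r}X_i\cdot\vc_{d-d_i}$. Since $d>d_2\ge\dots\ge d_r$, each exponent satisfies $d-d_i\ge1$, so every $\vc_{d-d_i}$ is base-point-free and very ample on $Y$ (it contains all products of $d-d_i$ restricted linear forms, hence separates points and tangent directions at least as well as the embedding system $\vc_1$). I would then record the identity $\nu^{-1}(p)=\widetilde{\tc}$, where $p$ is the coordinate point attached to $X_1$: a point maps to $p$ exactly when every $X_iF$ ($i\ge2$) vanishes there, which by base-point-freeness of the $\vc_{d-d_i}$ and the fibre computation below means it lies on $\widetilde{\tc}$. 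In particular the restriction $\nu:\tY-\widetilde{\tc}\to\qc-\{p\}$ is proper and surjective, so it suffices to prove it is injective and an immersion to conclude it is an isomorphism.

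Away from the exceptional divisor, i.e. on $Y-\tc$, separation is elementary: at $y\notin\tc$ pick $i_0\ge2$ with $X_{i_0}(y)\ne0$; multiplying sections of the very ample system $\vc_{d-d_{i_0}}$ by the local unit $X_{i_0}$ yields, among the coordinates of $\nu$, sections that separate $y$ from any other point and generate the Zariski cotangent space at $y$, so $\nu$ is injective and immersive there (to separate $y_1,y_2\notin\tc$ one takes a restricted linear form $\ell$ with $\ell(y_1)=0\ne\ell(y_2)$ and forms $X_i\,\ell\,(\ell')^{\,d-d_i-1}\in X_i\vc_{d-d_i}$ with $X_i(y_2)\ne0$ and $\ell'(y_2)\ne0$). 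Over the center, on the exceptional $\Proj^{r-1}$-fibre of $\tD=\Proj(\nc_{\Delta/Y})$ above $\delta\in\Delta$ with homogeneous coordinates $[\xi_1:\dots:\xi_r]$ dual to $dX_1,\dots,dX_r$, the coordinates of $\nu$ restrict to the linear forms $\xi_1$ (from $X_1$) and $F(\delta)\,\xi_i$ (from $X_iF$, $i\ge2$); choosing for each $i$ some $F$ with $F(\delta)\ne0$ recovers the full system $[\xi_1:\dots:\xi_r]$, so $\nu$ embeds each fibre and contracts exactly the point $[1:0:\dots:0]=\widetilde{\tc}\cap\tD$ to $p$.

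Finally I would glue the two analyses, showing that $\nu$ separates points lying in distinct exceptional fibres, separates an exceptional point from a point of $Y-\tc$, and is immersive in the horizontal ($\Delta$-)directions along $\tD$, by combining the fibre coordinates $F(\delta)\xi_i$ with the base behaviour of $F\in\vc_{d-d_i}$ and of the restricted linear forms, again exploiting $d-d_i\ge1$. Having established that $\nu|_{\tY-\widetilde{\tc}}$ is a proper injective immersion onto $\qc-\{p\}$, it is a closed immersion, whence the asserted isomorphism $\nu:\tY-\widetilde{\tc}\cong\qc-\{p\}$. I expect the \emph{main obstacle} to be precisely this last gluing step on and across the exceptional divisor: since $|\tX|$ fails to be very ample on all of $\tY$ (it contracts $\widetilde{\tc}$), one cannot invoke a single very-ampleness statement, and separation and immersivity must be verified by hand through the mixed coordinates $X_1$ and $X_iF$, keeping careful track of the various degrees $d-d_i$ and of how the removed locus $\widetilde{\tc}$ meets $\tD$.
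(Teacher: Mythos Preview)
Your treatment of (1) coincides with the paper's: both observe that $\Delta$ is a Cartier divisor on $\tc$ (so $\widetilde{\tc}\cong\tc$) and then compute $\oc_{\tY}(\tX)|_{\widetilde{\tc}}\cong\oc_{\widetilde{\tc}}$, which forces $\nu(\widetilde{\tc})$ to be a point and gives $\widetilde{\tc}\cap\tX=\emptyset$. (One small wrinkle: your parenthetical argument for $\tc\not\subseteq X$ is circular, since treating $X\cap\tc$ as a divisor on $\tc$ already presupposes this; the paper simply works with an $X$ for which $\Delta=X\cap\tc$, as is the case for $X_1$.)

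For (2) your plan is correct but the paper takes a genuinely different and shorter route that bypasses precisely the gluing across the exceptional divisor you flag as the main obstacle. The paper first identifies $p$ with the hyperplane $L\subset\vc_{\Delta,d}$ spanned by the sections $X_iM$ ($i\ge2$, $M\in\vc_{d-d_i}$), and shows $\nu^{-1}(p)=\widetilde{\tc}$ scheme-theoretically by proving that $\widetilde{\tc}=\bigcap_{i\ge2}\tX_i$ is itself a complete intersection in $\tY$ (via $N_{\widetilde{\tc},\tY}\cong(\pi^*N_{\tc,Y})(-\tD)\cong\oplus_{i\ge2}\oc_{\tY}(\tX_i)$). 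Then, given any $x\in\tY-\widetilde{\tc}$, the hyperplane $L_x$ of sections through $x$ differs from $L$, so one can choose $X'\in L_x$ with $X'\cap\tc=\Delta$. Any point or tangent vector not separated from $x$ by $\nu$ must lie, together with $x$, in the single divisor $\tX'=Bl_{\Delta}(X')$. But on $X'$ the ideal $\ic_{\Delta,X'}$ is generated by $X_2,\dots,X_r$, so $\ic_{\Delta,X'}(d_2)$ is globally generated; tensoring with the very ample $\oc_{X'}(d-d_2)$ (this is where $d>d_2$ enters) makes the restricted linear system very ample on $\tX'$, giving the required separation in one stroke. Your explicit-coordinate approach would also succeed and has the merit of making the geometry of $\nu$ on $\tD$ completely transparent, but the paper's ``restrict to a member of the linear system'' trick handles all cases---distinct fibres, mixed exceptional/non-exceptional points, horizontal tangent directions---uniformly, without any case analysis.
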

\begin{proof}
$(1)$ Since $\Delta $ is a Cartier divisor cut out on   $\tc $ by
$X$, it follows that the natural projection $$\pi:\widetilde{\tc}\to \tc$$ is in
fact an isomorphism.  So we have   $\tc \cong
\widetilde{\tc}=Bl_{\Delta}(\tc)\subset \widetilde{Y}$.
Furthermore, we have:
$$\oc_{\tY}(-\tD)\otimes \oc_{\widetilde{\tc}}\cong \pi^*(\oc_{\tc}(-\Delta))\cong
\pi^*(\ic_{X\cap \tc ,\tc}) \cong \oc_{\tY}(-\tD-\tX)\otimes
\oc_{\widetilde{\tc}}.
$$ Hence we find
$$\oc_{\tY}(\tX)\otimes \oc_{\widetilde{\tc}}\cong
\oc_{\widetilde\tc},
$$
and we are done.
\par\noindent
$(2)$ Consider the point $p\in \Proj'$ representing  the
hyperplane  $L\subset \vert\vc_{\Delta, d}\vert$ spanned by
divisors of the form $X_i\cup M_i$, with $i\geq 2 $  and $M_i\in
\vert \vc_{d-d_i}\vert$. Such a hyperplane is spanned by the image
of $(\vc_{d_2}\cap H^0(Y,\ic_{\tc,Y}(d_2))\otimes \vc _{d-d_2}$ in
$\vert \vc_{\Delta, d}\vert$. Since its base locus is $\tc$, it follows that
$\nu(\widetilde\tc) =p$. On the other hand (\cite{FultonIT},
B.6.10 p.437), we have:
$$N_{\widetilde\tc, \tY}\cong (\pi^*N_{\tc, Y})(-\tD)\cong \oplus _{i=2}^{r-1}\oc_{\tY}(\tX_i).$$
It follows that $\widetilde\tc $ is a complete intersection also
in $\tY$:
$$\bigcap _{i=2}^{r-1}\tX_i=\widetilde\tc \subset\tY.$$
But the hyperplane $L\subset \Proj' \cong \vert\tX\vert^*$  is
spanned by divisors of the form $\tX _i\cup \widetilde{M}_i$, with
$i\geq 2$, $M_i\in  \vert \vc_{d-d_i}\vert$, and
$\widetilde{M}_i:=$ strict transform of $M_i$ in $\widetilde Y$.
Since the base locus of $L$ is $\widetilde\tc$, it follows  that
$\nu^{-1}(p) = \widetilde\tc$ scheme theoretically.
Consider a point $x\in \widetilde{Y}-\widetilde\tc$ and its image
$\nu (x)\not =p$. The corresponding hyperplane $L_x\not= L\subset
\vc_{\Delta, d}$ must contain a divisor $X\in L_x$ such that
$\Delta =X\cap \tc$. If $\nu $ did not separate $x$ from another
point or a tangent vector, then they both would be contained in
$\widetilde{X}:=Bl_{\Delta}(X)$. This is impossible because
$\ic_{\Delta, X}(d_{2})$ is  generated by $\vc_{\Delta, d_2}$,
hence our linear system is very ample on $\widetilde{X}$ (recall
that $d>d_2$).
\end{proof}

\section{Proof of Theorem \ref{maintech}}
\begin{notations}
Let $Y$ be a smooth projective variety of dimension $m=2r+1$, and
let $\overline{X}\in \vert\vc_d\vert$, $G_i\in
\vert\vc_{d_i}\vert$, $1\leq i \leq r $,  be a regular sequence of
smooth hypersurfaces. Assume moreover  that $d>d_1
> \dots > d_r$. Define $\tc:=\bigcap_{i=1}^{r} G_i$ and $\Delta:=\tc \cap \bX$ and
fix $G=G_{i_0}$. If $X\in
\vert\vc_{\Delta,d}\vert$ denotes a general hypersurface
containing $\Delta$, define also   $$W:=X\cap G.$$ Consider the Gysin
map
$$\ks: H_{\bullet }(G)\longrightarrow H_{\bullet-2 }(W),
$$
where $k :W\to G$ denotes the imbedding morphism.
\end{notations}
Theorem \ref{maintech} will follow from a slightly stronger result:

\begin{theorem}
\label{main}
The Gysin map
$$\ks: H_{m+1 }(G)\longrightarrow H_{m-1  }(W)
$$
is injective for a general $W\in \vert\vc_d\cap H^0(G,\ic_{\Delta,
G}(d))\vert$.
\end{theorem}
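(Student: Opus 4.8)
The plan is to reduce the statement, by means of the blow-up formalism developed above, to an ordinary Lefschetz-type statement on the projective model $\qc$ of Proposition \ref{contraction}, in which $W$ becomes a genuine hyperplane section disjoint from the contracted locus, and then to invoke the Lefschetz Theorem with Singularities of Goresky--MacPherson. First I would apply Lemma \ref{commute} with the pair $(Y,X)$ replaced by $(G,W)$. Writing $\widetilde{G}:=Bl_{\Delta}(G)$ and $\widetilde{W}:=Bl_{\Delta}(W)$ (recall that inside $G$ the subscheme $\Delta$ is a complete intersection, cut out by the $G_i\cap G$ with $i\neq i_0$ and by $\bX\cap G$), one obtains a commutative square of Gysin maps
$$
\begin{array}{ccc}
H_{m+1}(G) & \stackrel{\jmath_\star}{\hookrightarrow} & H^{BM}_{m+1}(\widetilde{G})\\
\downarrow\,\ks & & \downarrow\,\tks\\
H_{m-1}(W) & \stackrel{\imath_\star}{\hookrightarrow} & H^{BM}_{m-1}(\widetilde{W})
\end{array}
$$
whose horizontal arrows are injective by Theorem \ref{injective}. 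A diagram chase then shows that it is enough to prove that $\tks$ is injective on the image of $\jmath_\star$.

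Next I would use Proposition \ref{contraction}, applied with $G$ in the role of $Y$: the system $|\widetilde{W}|$ is base point free, contracts $\widetilde{\tc}$ to a single point $p$, and is very ample on $\widetilde{G}-\widetilde{\tc}$, so it defines a morphism $\nu:\widetilde{G}\to\qc$ inducing an isomorphism $\widetilde{G}-\widetilde{\tc}\cong\qc-\{p\}$. Since $\widetilde{\tc}\cap\widetilde{W}=\emptyset$, the divisor $\widetilde{W}$ is carried isomorphically onto a hyperplane section $W':=\nu(\widetilde{W})$ of $\qc$ that avoids $p$. Because the orientation class $u$ defining $\tks$ is supported near $\widetilde{W}\subset U:=\widetilde{G}-\widetilde{\tc}$, excision lets $\tks$ factor through restriction to $U$ followed by the Gysin map of $W'$ on $\qc$; using the localization sequence (\ref{exsq}) and $H_j(\{p\})=0$ for $j>0$ to identify $H^{BM}_{m+1}(U)\cong H_{m+1}(\qc)$, this reads
$$
\tks:\ H_{m+1}(\widetilde{G})\longrightarrow H^{BM}_{m+1}(U)\cong H_{m+1}(\qc)\stackrel{\cap\,u}{\longrightarrow}H_{m-1}(W')\cong H^{BM}_{m-1}(\widetilde{W}).
$$
Hence injectivity of $\tks$ on $\mathrm{Im}(\jmath_\star)$ follows once I establish (i) that the Gysin map $\kappa:=(\cap\,u):H_{m+1}(\qc)\to H_{m-1}(W')$ is injective, and (ii) that no nonzero class of $\mathrm{Im}(\jmath_\star)$ is supported on the contracted locus $\widetilde{\tc}$ (the kernel of restriction to $U$).

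For (i) I would invoke the Lefschetz Theorem with Singularities (\cite{GM}, p.~199), exactly as anticipated in the Introduction and in analogy with (\cite{RCMP}, Lemma 3.2). As $W'$ is a general hyperplane section of the projective variety $\qc$ which, crucially, misses the point $p$ produced by the contraction, one is placed in the ``very ample outside the base locus'' situation, and in the boundary degree $m+1$ (one above $\dim\qc=m$) this yields the injectivity of $\kappa$ with $\mathbb{Z}$-coefficients; over $\mathbb{Q}$ it would already follow from Hard Lefschetz, which is why the statement is new only integrally.

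The main obstacle is precisely the interaction of integrality with the singularities. Two points demand genuine effort. First, when $\Delta$ is singular -- the typical case, e.g. $\Delta=Z\cup R$ -- the variety $\qc$ is singular not only at $p$ but also along the image of the exceptional locus, which $W'$ does meet; controlling the rectified homological depth of $\qc$ so that the \emph{integral} Lefschetz injectivity in (i) survives is the heart of the matter, and is where the hypotheses forcing $W$ to have only isolated singularities (cf. Remark \ref{unlikely}) must enter. Second, the contraction $\nu$ could a priori lose information in degree $m+1$, since $\widetilde{\tc}\cong\tc$ has complex dimension $r+1$ and $H_{m+1}(\widetilde{\tc})$ need not vanish; establishing (ii) -- equivalently, that $\ks$ already detects the classes pulled back from $\tc\subset G$, so that $\jmath_\star(x)$ is supported on $\widetilde{\tc}$ only for $x=0$ -- is the delicate bookkeeping that keeps the passage to $\qc$ from being lossy. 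I expect step (i) under these weakened smoothness hypotheses, together with the control of the contracted locus in (ii), to be the genuinely hard part.
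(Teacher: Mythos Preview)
Your outline identifies the right geometric picture (blow up along $\Delta$, contract $\widetilde{\tc}$ via Proposition~\ref{contraction}, and appeal to a Lefschetz-type statement), but there are genuine gaps at precisely the two points you flag, and one earlier gap you do not flag.

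First, your opening commutative square already fails as written: Theorem~\ref{injective} and Lemma~\ref{commute} require the base variety to be smooth, but $W=X\cap G$ is typically singular along $\mathrm{Sing}\,\Delta$. The paper's remedy is to excise $S:=\mathrm{Sing}\,\Delta$ first: since $\dim S\le r-1$, one has $H_{m+1}(G)\cong H^{BM}_{m+1}(G^0)$, and on the smooth quasi-projective varieties $G^0,W^0,\Delta^0$ the blow-up machinery applies verbatim.

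Second, your step~(i) is not a consequence of Lefschetz with Singularities. That theorem controls the restriction $H^k(\qc)\to H^k(W')$ (equivalently the push-forward in homology), not the Gysin map $H_{m+1}(\qc)\to H_{m-1}(W')$; passing between the two requires Poincar\'e duality, which fails on the singular $\qc$. The paper never works on $\qc$: after excising $S$ it stays on the \emph{smooth} open variety $\widetilde{G}^0-\widetilde{\tc}^0$, where $H^{BM}_{m+1}\cong H^{m-1}$ by~(\ref{smooth}), and then Lefschetz with Singularities for the very ample system $|\widetilde{W}^0|$ yields the needed injection $H^{m-1}(\widetilde{G}^0-\widetilde{\tc}^0)\hookrightarrow H^{m-1}(\widetilde{W}^0)$. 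This is Proposition~\ref{comesfromtc}, whose conclusion is \emph{not} that $\ks$ is injective, but only that every $x\in\ker\ks$ lies in $h_*(H_{m+1}(\tc))$.

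Third, your step~(ii) is not bookkeeping but the crux. Showing that no nonzero class coming from $\tc$ lies in $\ker\ks$ is handled in the paper by a torsion/non-torsion split: the non-torsion part dies immediately by Hard Lefschetz over~$\mathbb Q$, while for the torsion part one needs Proposition~\ref{Tor}. That proposition first observes $H_{m+1}(\tc)$ is torsion-free (since $\dim\mathrm{Sing}\,\tc\le r-2$ gives $H_{m+1}(\tc)\cong H^1(\tc-\mathrm{Sing}\,\tc)$), reduces to $\mathbb Q$-coefficients, and then runs an induction on $r$: push $y$ into the intermediate $R:=G\cap G_j$, use Milnor's connectivity of the links of the isolated singularities of $R$ to lift to $H^{m-3}(R)\cong H^{m-3}(G)$, kill the class by Hard Lefschetz, and cut by a general hyperplane for the inductive step. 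None of this is visible in your sketch.

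In short, the paper's proof splits $x$ into torsion and non-torsion parts; the latter is handled by Hard Lefschetz over~$\mathbb Q$, the former by Proposition~\ref{comesfromtc} (the excise-then-blow-up argument on the smooth locus) together with Proposition~\ref{Tor} (the link/induction argument). Your uniform approach through $\qc$, even if the smoothness issue were repaired, would still have to supply the substance of Proposition~\ref{Tor}.
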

We start with:

\begin{proposition}
\label{comesfromtc} Assume $r\geq 2$ and define $\tc :=
\bigcap_{i=1}^{r} G_i$. Assume  $x\in H_{m+1}(G)$ is such that
$\ks(x)=0\in H_{m-1}(W)$, for a general $W\in \vert\vc_d\cap
H^0(G,\ic_{\Delta, G}(d))\vert$. Then $x$ belongs to the image of
the push forward    from $\tc$:
$$ x\in {\rm{Im}}(h_*:H_{m+1}(\tc) \to H_{m+1}(G)).
$$
\end{proposition}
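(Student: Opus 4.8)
The plan is to transfer the problem to the blow-up $\widetilde{G}:=Bl_{\Delta}(G)$ (smooth, since $G$ and $\Delta$ are), where the relevant linear system becomes base-point free, and then to localize away from the locus it contracts. I will use throughout that $\Delta=\bX\cap\tc$ is regularly embedded both in $G$ and in $W=G\cap X$: indeed $X\cap\tc=\Delta$ (two effective divisors of degree $d$ on $\tc$, one containing the other, coincide), so inside $W$ the scheme $\Delta$ is cut out by the $r-1$ divisors $G_i|_W$, $i\neq i_0$, of the right codimension. First I would pass to the blow-up: writing $\widetilde{W}:=Bl_{\Delta}(W)\subset\widetilde{G}$ and $\widetilde{\tc}:=Bl_{\Delta}(\tc)$, Lemma \ref{commute} (with $G,W$ in place of $Y,X$) gives the commutativity of the square relating the restriction Gysin maps $\ks,\tks$ to the blow-up Gysin maps $\jmath_{\star},\imath_{\star}$, i.e.\ $\imath_{\star}\circ\ks=\tks\circ\jmath_{\star}$. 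Hence $\ks(x)=0$ forces $\tks(\widetilde{x})=0$, where $\widetilde{x}:=\jmath_{\star}(x)\in H_{m+1}(\widetilde{G})$ and $\tks:H_{m+1}(\widetilde{G})\to H_{m-1}(\widetilde{W})$.

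Next I would invoke the contraction. Applying Proposition \ref{contraction} inside $G$ to the complete intersection $\Delta=\bX|_G\cap\bigcap_{i\neq i_0}G_i|_G$ — whose top degree $d$ strictly exceeds all the others — the system $|\widetilde{W}|$ is base-point free and defines a morphism $\nu:\widetilde{G}\to\qc$ that contracts $\widetilde{\tc}\cong\tc$ to one point $p$ and restricts to an isomorphism $U:=\widetilde{G}-\widetilde{\tc}\xrightarrow{\ \sim\ }\qc-\{p\}$ on which $|\widetilde{W}|$ is very ample. Moreover $\widetilde{W}\cap\widetilde{\tc}=\emptyset$, so for general $W$ the divisor $\widetilde{W}\subset U$ is a generic hyperplane section of $\qc$ missing $p$.

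Then I would localize on $U$. The Borel--Moore sequence (\ref{exsq}) for the closed set $\widetilde{\tc}\subset\widetilde{G}$ reads
$$H_{m+1}(\widetilde{\tc})\xrightarrow{\ g_{*}\ }H_{m+1}(\widetilde{G})\xrightarrow{\ \rho\ }H^{BM}_{m+1}(U)\to\cdots,$$
using that $\widetilde{G},\widetilde{\tc}$ are compact. Since $\widetilde{W}$ is disjoint from $\widetilde{\tc}$, the Gysin map factors as $\tks=\tks^{U}\circ\rho$, where $\tks^{U}:H^{BM}_{m+1}(U)\to H_{m-1}(\widetilde{W})$ is the Gysin map of the divisor $\widetilde{W}\subset U$; hence $\rho(\widetilde{x})\in\ker\tks^{U}$. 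Granting that $\tks^{U}$ is injective, exactness gives $\widetilde{x}=g_{*}(t)$ for some $t\in H_{m+1}(\widetilde{\tc})\cong H_{m+1}(\tc)$. Pushing down along $\pi:\widetilde{G}\to G$ and using $\pi_{*}\circ\jmath_{\star}=\mathrm{id}$ (Theorem \ref{injective}) together with the factorization $\pi|_{\widetilde{\tc}}\colon\widetilde{\tc}\xrightarrow{\sim}\tc\stackrel{h}{\hookrightarrow}G$, I conclude $x=\pi_{*}(\widetilde{x})\in\mathrm{Im}(h_{*})$, as required.

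The hard part will be the injectivity of $\tks^{U}:H^{BM}_{m+1}(U)\to H_{m-1}(\widetilde{W})$. Here $U$ is smooth quasi-projective of dimension $\dim_{\mathbb{C}}G=m$, so $H_{m+1}$ sits just above the middle dimension, where Lefschetz-type injectivity is expected, and $\widetilde{W}$ is a generic, very ample (hyperplane) divisor on $U$ — exactly the configuration in which the Lefschetz Theorem with Singularities yields an injective Gysin map (\cite{GM}, p.~199; cf.\ \cite{RCMP}, Lemma 3.2). The entire difficulty, and the reason the statement is true, is that the non-injectivity on all of $\widetilde{G}$ (equivalently on $\qc$) is concentrated at the contracted point $p$ — i.e.\ at $\tc$ — so that deleting $\widetilde{\tc}$ removes it; this is also why the kernel localizes on $\tc$. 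The technical points I expect to have to settle carefully are the non-compactness of $U$ (a single, possibly singular, point removed from $\qc$) and the singularities of $\widetilde{W}$; I would handle these by passing to the affine complement $\qc-\widetilde{W}$ and applying Artin--Grothendieck/Goresky--MacPherson vanishing, just as in the base-point-free smooth case.
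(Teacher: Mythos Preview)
Your overall strategy is the paper's: pass to the blow-up along $\Delta$, use the commutative square of Lemma~\ref{commute} together with Theorem~\ref{injective}, invoke Proposition~\ref{contraction} to get very-ampleness of $|\widetilde W|$ on the complement of $\widetilde{\tc}$, apply Lefschetz with singularities there, and read the conclusion off the Borel--Moore exact sequence for the pair $(\widetilde G,\widetilde{\tc})$. Structurally there is nothing to add.

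There is, however, a genuine gap. You write that $\widetilde G=Bl_{\Delta}(G)$ is smooth ``since $G$ and $\Delta$ are,'' but nothing in the hypotheses forces $\Delta$ to be smooth: the $G_i$ and $\overline X$ are individually smooth and form a regular sequence, yet their intersection $\Delta$ can be singular (the Introduction says explicitly that the main difficulties come from the singularities of $\Delta$). Regular embedding alone does not make the blow-up smooth; already blowing up a nodal complete-intersection curve in a smooth threefold produces an ordinary double point. Consequently your open set $U=\widetilde G-\widetilde{\tc}$ need not be smooth, the identification $H^{BM}_{m+1}(U)\cong H^{m-1}(U)$ you rely on is unavailable, and the Lefschetz step for $\tks^{U}$ does not go through as stated. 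Your closing remark about handling ``the singularities of $\widetilde W$'' via Artin--Grothendieck vanishing does not address this, since the problem is the ambient $U$, not the divisor.

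The paper's proof is exactly yours with this defect repaired. One first excises $S:=\mathrm{Sing}\,\Delta$: since $\dim S\le r-1$ one has $H_{m+1}(S)=H_m(S)=0$, hence $H_{m+1}(G)\cong H^{BM}_{m+1}(G^0)$ with $G^0:=G\setminus S$, and similarly $H_{m+1}(\tc)\cong H^{BM}_{m+1}(\tc^0)$. One then blows up the \emph{smooth} locus $\Delta^0$ inside the smooth open varieties $G^0$ and $W^0$, so that $\widetilde{G}^0$, $\widetilde{W}^0$ and $\widetilde{G}^0-\widetilde{\tc}^0$ are all smooth, Poincar\'e--Lefschetz duality is available, and the Lefschetz Theorem with Singularities (\cite{GM}, p.~199) applies cleanly. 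With this one extra excision step inserted at the outset, your argument becomes the paper's verbatim.
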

\begin{proof}
Denote by $S:={\text{Sing}\,}\Delta$ the singular locus of
$\Delta$, and set
$$
\Delta^0:=\Delta-S,\hskip2mm \tc^0:= \tc- S,\hskip2mm G^0:=G-S,\hskip2mm   W^0:=W-S.
$$
Observe   that $\Delta^0$, $G^0$ and $W^0$ are smooth. Since
$\dim S\leq r-1$ (\cite {IJM}, Proof of Theorem  1.2), it follows that
$H_{m+1}(S)=H_{m}(S)=0$ (\cite{FultonYT}, Lemma 4, p. 219).
Therefore, from the exact sequence for Borel-Moore homology:
$$
\dots \longrightarrow H_{m+1}(S) \longrightarrow H_{m+1}(G)
\longrightarrow H^{BM}_{m+1}(G^0) \longrightarrow
H_{m}(S)\longrightarrow \dots,
$$
we get $H_{m+1}(G)\cong H^{BM}_{m+1}(G^0)$ (compare with
(\ref{exsq})).  We thus find $x\in
H^{BM}_{m+1}(G^0)$, and therefore $\ks (x)=0\in
H^{BM}_{m-1}(W^0)$.
\par
Combining Theorem \ref{injective}  and Lemma \ref{commute}   we
have moreover a commutative diagram with injective horizontal
maps:
$$
\begin{array}{ccccc}
H^{BM}_{m+1}(G^0) &\stackrel{\jmath_{\star}}\hookrightarrow  & H^{BM}_{m+1}(\widetilde{G}^0)  \\
\stackrel {\ks}{}\downarrow &  &\stackrel {\tks }{}\downarrow  \\
H^{BM}_{m-1}(W^0) &\stackrel{\imath_{\star}}\hookrightarrow  & H^{BM}_{m-1}(\widetilde{W}^0), \\
\end{array}
$$
with $\widetilde{G^o}:=Bl_{\Delta^0}(G^0)$ and
$\widetilde{W}^0:=Bl_{\Delta^0}(W^0)$. We thus find
$$\tx:= \jmath_{\star} (x)\in H^{BM}_{m+1}(\widetilde{G^o}), \hskip2mm \textrm{with}
\hskip2mm \tks(\tx)=0\in H^{BM}_{m-1}(\widetilde{W}^0).$$
Let us look at the exact sequence:
\begin{equation}
\label{fromtc} \dots \longrightarrow H^{BM}_{m+1}(\widetilde\tc^0)
\stackrel{\sigma}{\longrightarrow} H^{BM}_{m+1}(\widetilde{G}^0)
\stackrel{\rho}{\longrightarrow}
H^{BM}_{m+1}(\widetilde{G}^0-\widetilde\tc^0) \longrightarrow\dots
\end{equation}
($\tc^0\cong \widetilde{\tc}^0\cong Bl_{\Delta^0}(\tc^0)$).
Applying Notations \ref{notationscontraction} and Proposition
\ref{contraction} to the linear system $\vert \widetilde{W}^0
\vert$ on $\widetilde{G}^0$, we find that $\widetilde{W}^0\cap
\widetilde\tc^0=\emptyset $. Then the linear system   $\widetilde{W}^0$ is very
ample on the smooth variety $\widetilde{G}^0-\widetilde\tc^0$. Since
$$\tks (\tx)=0\in H^{BM}_{m-1}(\widetilde{W}^0)\cong H^{m-1}(\widetilde{W}^0),$$
it follows by Lefschetz Theorem with Singularities (\cite{GM}, p.199) that:
$$\rho(\tx)=0\in H^{BM}_{m+1}(\widetilde{G}^0-\widetilde\tc^0)\cong H^{m-1}(\widetilde{G}^0-\widetilde\tc^0).$$
Then (\ref{fromtc}) implies  $\tx=\sigma(y)\in
{\rm{Im}}(H^{BM}_{m+1}(\widetilde\tc^0) \to
H^{BM}_{m+1}(\widetilde{G}^0))$. We are done because $y\in
H^{BM}_{m+1}(\widetilde\tc^0)\cong H^{BM}_{m+1}(\tc^0)\cong
H_{m+1}(\tc)$, and $h_*(y)\in H_{m+1}(G)$ must coincide with $x$.
In fact they both go to $\tx \in H^{BM}_{m+1}(\widetilde{G}^0)$
(\cite{FultonYT}, p. 219, Exercise 5), and the map
$$H_{m+1}(G)\cong H^{BM}_{m+1}(G^0) \to H^{BM}_{m+1}(\widetilde{G}^0)$$
is injective by Theorem \ref{injective}.
\end{proof}

\begin{proposition}
\label{Tor} Assume $r\geq2$ and define $\tc := \bigcap_{i=1}^{r}
G_i$. If $y\in H_{m+1}(\tc)$ is such that $h_*(y)\in
{\rm{Tor}}\,(H_{m+1}(G))$ then $y=0$.
\end{proposition}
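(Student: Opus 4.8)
The plan is to transport the statement across Poincar\'e duality into a cohomological assertion on the smooth ambient variety $G$, and then to play off the Lefschetz hyperplane theorem for the complete intersection $\tc\subset G$ against the Hard Lefschetz theorem on $G$. I will use throughout that $\tc=\bigcap_{i=1}^{r}G_i$ is a smooth complete intersection of dimension $\dim_{\mathbb C}\tc=r+1$, cut out inside the smooth variety $G=G_{i_0}$ (of dimension $\dim_{\mathbb C}G=2r=m$) by the $r-1$ ample divisors $G_i\cap G$, $i\neq i_0$.

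First I would fix the numerology. Poincar\'e duality identifies $H_{m+1}(\tc)\cong H^{1}(\tc)$ and $H_{m+1}(G)\cong H^{m-1}(G)$, and under these identifications the push-forward $h_*$ becomes the cohomological Gysin map $h_!:H^{1}(\tc)\to H^{m-1}(G)$, which raises degree by $m-2=2(r-1)=2\,{\rm{codim}}(\tc,G)$. Writing $\alpha\in H^{1}(\tc)$ for the class dual to $y$, the hypothesis that $h_*(y)$ be a torsion class translates (since integral Poincar\'e duality preserves torsion) into $h_!(\alpha)=0$ in $H^{m-1}(G;\mathbb Q)$.

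Next I would linearize $\alpha$. By the Lefschetz hyperplane theorem applied to the complete intersection $\tc\subset G$, the restriction $h^{*}:H^{1}(G)\to H^{1}(\tc)$ is an isomorphism (one needs $1\leq \dim_{\mathbb C}\tc-1=r$, which holds as $r\geq 2$); hence $\alpha=h^{*}\beta$ for a unique $\beta\in H^{1}(G)$. The projection formula then gives $h_!(\alpha)=h_!(h^{*}\beta)=\beta\cup[\tc]_{G}$, where $[\tc]_{G}\in H^{m-2}(G)$ is the class of $\tc$ in $G$. Because $\tc$ is cut out in $G$ by the $r-1$ divisors $G_i\cap G$ in the expected codimension, one has $[\tc]_{G}=\prod_{i\neq i_0}[G_i\cap G]$, a product of $r-1$ ample classes. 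By the Hard Lefschetz theorem, cup product with an ample class is injective on $H^{p}(G;\mathbb Q)$ for every $p<\dim_{\mathbb C}G=2r$; composing $r-1$ such cup products along the source degrees $1,3,\dots,2r-3$ (all $<2r$) shows that $\cup\,[\tc]_{G}:H^{1}(G;\mathbb Q)\to H^{m-1}(G;\mathbb Q)$ is injective.

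Combining the two steps finishes the argument: $h_!(\alpha)=0$ in $H^{m-1}(G;\mathbb Q)$ forces $\beta=0$ in $H^{1}(G;\mathbb Q)$, and since $H^{1}(G;\mathbb Z)$ is torsion free (universal coefficients), in fact $\beta=0$ over $\mathbb Z$; thus $\alpha=h^{*}\beta=0$ and therefore $y=0$. Note that this computation simultaneously yields the vanishing of ${\rm{Tor}}\,H_{m+1}(\tc)$ that the statement tacitly requires, because $H^{1}(\tc)\cong H_{m+1}(\tc)$ is torsion free. The step I expect to require the most care is the input that $\tc$ is smooth, so that both Poincar\'e duality and the Lefschetz hyperplane theorem apply to it; if $\tc$ were singular one would instead run the whole argument over the smooth locus $\tc-{\rm{Sing}}\,\Delta$ in Borel--Moore homology, exactly as in the proof of Proposition \ref{comesfromtc}. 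The other point to verify carefully is that $[\tc]_{G}$ is genuinely a product of \emph{ample} classes, which is precisely what makes the Hard Lefschetz step bite.
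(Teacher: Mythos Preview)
Your strategy---reduce to $H^1$ via Poincar\'e duality, then combine the Lefschetz hyperplane theorem with Hard Lefschetz---is precisely the engine behind the paper's own proof. The gap is the smoothness of $\tc$: under the standing hypotheses of Theorem~\ref{RCMPLopez} the $G_i$ form a regular sequence of \emph{smooth} hypersurfaces, but nothing forces $\tc=\bigcap_i G_i$ to be smooth, and the paper works only with the bound $\dim{\rm Sing}\,\tc\leq r-2$ (so for $r\geq 3$ the singular locus is positive-dimensional). Without smoothness your Poincar\'e-duality identification $H_{m+1}(\tc)\cong H^1(\tc)$ fails; what one has instead is $H_{m+1}(\tc)\cong H^{BM}_{m+1}(\tc')\cong H^1(\tc')$ for $\tc'=\tc-{\rm Sing}\,\tc$. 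The Lefschetz hyperplane isomorphism $H^1(G)\cong H^1(\tc)$ still holds for singular $\tc$, but it lands in $H^1(\tc)$, not in $H^1(\tc')$, so to run your projection-formula step one must first show that the class $\alpha\in H^1(\tc')$ lifts to $H^1(\tc)$---equivalently, that its image in $H^2(\tc,\tc')$ vanishes.

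This lifting is exactly the extra work the paper supplies, and it is not routine. For $r=2$ the singularities are isolated and $H^2(\tc,\tc';\mathbb Q)\cong\bigoplus_j H^1(K_j;\mathbb Q)=0$ by Milnor's connectivity theorem for links; after that your argument runs verbatim. For $r\geq 3$ the singular locus is no longer zero-dimensional and the paper does not attempt to analyse $H^2(\tc,\tc')$ directly: it instead pushes $y$ forward into the intermediate intersection $R=G\cap G_j$ (which \emph{does} have only isolated singularities), runs the Milnor/Hard Lefschetz argument there to obtain $f_*(y)=0$, and then descends by induction on $r$ after cutting with a general hyperplane. So your closing remark that ``one would instead run the whole argument over the smooth locus \ldots exactly as in the proof of Proposition~\ref{comesfromtc}'' understates what is needed: passing to the smooth locus is immediate, but bridging $H^1(\tc')$ and $H^1(G)$ is the substantive content of the proposition.
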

\begin{proof}
First notice that ${\rm{Tor}}\,(H_{m+1}(\tc))=0$. In fact, since
$\dim {\rm{Sing}}\,\tc\leq r-2$ (\cite{IJM}, Proof of Theorem
1.2),  it follows that
$$
H_{m+1}(\tc)\cong H^{BM}_{m+1}(\tc-{\rm{Sing}}\,\tc) \cong
H^{1}(\tc-{\rm{Sing}}\,\tc).
$$
Furthermore, $H^{1}(\tc-{\rm{Sing}}\,\tc)$ is torsion free by the Universal
Coefficient Theorem (\cite{Spanier}, p. 243). From
${\rm{Tor}}\,(H_{m+1}(\tc))=0$ it follows $H_{m+1}(\tc;
\mathbb{Z})\subset H_{m+1}(\tc; \mathbb{Q})$, and we may assume
$y\in H_{m+1}(\tc; \mathbb{Q})$ is such that $0=h_*(x)\in
H_{m+1}(G;\mathbb{Q})$. From now on, in the rest of the proof, all
cohomology and homology groups are with $\mathbb Q$-coefficients.

We are going to argue by induction on $r\geq2 $.

$\bullet\quad r=2$.

In this case, by (\cite{Vogel}, Proposition 4.2.6, p.133), we know
that $\tc=G_1\cap G_2$ is a threefold with isolated singularities
(see also \cite{IJM}, loc. cit.). Set $\Gamma:=
{\rm{Sing}}\,\tc=\{x_1,\dots, x_s\}$, $\tc':=\tc-\Gamma$. Then
$y\in H_{5}(\tc)\cong H^{BM}_{5}(\tc')\cong H^{1}(\tc')$. We claim
that:
\begin{equation}
\label{claim'} y\in {\rm{Im}}(H^{1}(\tc)\to H^{1}(\tc')).
\end{equation}
From the cohomology exact sequence:
$$\dots\longrightarrow H^{1}(\tc) \longrightarrow H^{1}(\tc') \longrightarrow H^{2}(\tc, \tc') \longrightarrow\dots  $$
we see that in order to prove the claim it suffices to show that
$H^{2}(\tc, \tc')=0$. Choose a small ball $S_j\subset G$ around
each $x_j$, and set $B_j:=S_j\cap W$ and $B_j^0:=B_j-\{x_j\}$.
Then by excision we have
$$
H^{2}(\tc, \tc')\cong \bigoplus_{j=1}^s H^{2}(B_j,B_j^0)\cong
\bigoplus_{j=1}^s H^{1}(K_j),
$$
where $K_j$ denotes the link of the singularity $x_j$
(\cite{Dimca1}, p. 245). The claim (\ref{claim'}) follows by
Milnor's Theorem (\cite{Dimca1}, Theorem 3.2.1, p.76). To conclude
the proof in the case $r=2$ it suffices to observe that any $y\in
H^{1}(\tc)\cong H^{1}(G)$ such that $0=h_*(y)\in H_{5}(G)\cong
H^{3}(G)$  vanishes by Hard Lefschetz Theorem. Recall that now we
are assuming that all cohomology and homology groups are with
$\mathbb Q$-coefficients.

\medskip

$\bullet\quad r\geq 3$.

Set $R:=G\cap G_j$, $j\not= i_0$, and denote by $f:\tc \to R$ the
inclusion morphism. We claim that:
\begin{equation}
\label{claim} z:=f_*(y)=0\in H_{m+1}(R).
\end{equation}
 First we have
\begin{equation}
\label{psi0} \psi_*(z)=\psi_*(f_*(y))=(\psi\circ f)_*(y)=h_*(y)=0
\in H_{m+1}(G),
\end{equation}
with $\psi: R\to G$ the inclusion morphism. By (\cite{Vogel},
Proposition 4.2.6, p.133), $R$ has at worst finitely  many
singularities. Set
$$
\Gamma:= {\rm{Sing}}\,R=\{x_1,\dots, x_s\},\hskip2mm R':=R-\Gamma.
$$
Then $z\in H_{m+1}(R)\cong H^{BM}_{m+1}(R')\cong H^{m-3}(R')$.
Consider the cohomology long exact sequence:
\begin{equation}
\label{exactsequence} \dots\longrightarrow H^{m-3}(R)
\longrightarrow H^{m-3}(R') \longrightarrow H^{m-2}(R, R')
\longrightarrow\dots,
\end{equation}
choose a small ball $S_j\subset G$ around  each $x_j$, and set
$B_j:=S_j\cap R$ and $B_j^0:=B_j-\{x_j\}$. By excision we have
\begin{equation}
\label{exactsequence'}H^{m-2}(R, R')\cong \bigoplus_{j=1}^s
H^{m-2}(B_j,B_j^0),
\end{equation}
and by (\cite{Dimca1}, p. 245) we get:
\begin{equation}
\label{exactsequence''}  H^{m-2}(B_j,B_j^0)\cong \bigoplus_{j=1}^s
H^{m-3}(K_j)=0.
\end{equation}
Here $K_j$ denotes the link of the singularity $x_j$. The last
vanishing follows by Milnor's Theorem (\cite{Dimca1}, Theorem
3.2.1, p.76), because the link of an isolated singularity of
dimension $\dim R=m-1$ is $(m-3)$-connected.

Combining (\ref{psi0}),  (\ref{exactsequence}),
(\ref{exactsequence'}) and (\ref{exactsequence''}) we have
$$
z\in H^{m-3}(R)\cong H^{m-3}(G), \hskip2mm 0=\psi _*(z)\in
H_{m+1}(G)\cong H^{m-1}(G),
$$
and our claim (\ref{claim}) follows by Hard Lefschetz Theorem.

Having proved $f_*(y)=0$, we now recall that $\dim
{\rm{Sing}}\,\tc\leq r-2$ by (\cite{IJM}, Proof of Theorem  1.2).
Then we can choose a general hyperplane $H$ and look at the
following commutative diagram:
$$
\begin{array}{ccc}
 H_{m+1}(\tc)\cong H^{1}(\tc') & \stackrel{f_*}{\longrightarrow} & H_{m+1}(R)\cong H^{m-3}(R')\\
\quad \downarrow & & \downarrow \,\,\,\\
H_{m-1}(\tc\cap H)\cong H^{1}(\tc' \cap H) & \longrightarrow &  H_{m-1}(R\cap H)\cong H^{m-3}(R'\cap H),\\
\end{array}
$$
where $\tc':=\tc-{\rm{Sing}}\,\tc$, and the vertical maps are
injective by Lefschetz Theorem with Singularities (\cite{GM}, p.
199). The statement follows  by induction.
\end{proof}

\begin{proof}[Proof of Theorem \ref{main}]
Choose an element $0\not= x\ \in H_{m+1 }(G)$. We have to prove
$0\not= \ks(x)\in  H_{m-1  }(W)$. We distinguish two cases,
according that either $r=1$ or $r\geq 2$.

If $r=1$ then we may assume $x\in H_{3 }(G; \mathbb{Q})$ because
${\rm{Tor}}\,H_{3 }(G)\cong {\rm{Tor}}\,H^{1 }(G)=0$ by the
Universal Coefficient Theorem. And the claim follows because the
composite of $\ks$ with the push-forward (put $m=2$):
$$H^{m-1
}(G;\mathbb{Q})\cong H_{m+1}(G;\mathbb{Q})
\stackrel{\ks}\longrightarrow H_{m-1
}(W;\mathbb{Q})\longrightarrow H_{m-1 }(G;\mathbb{Q})\cong H^{m+1
}(G;\mathbb{Q})$$ is injective by Hard Lefschetz Theorem.

Next assume $r\geq 2$. If $x \notin {\rm{Tor}}\,(H_{m+1 }(G))$
then again we may assume $x\in H_{m+1 }(G;\mathbb{Q})$, and we may
conclude as before. If $0\neq x \in {\rm{Tor}}\,(H_{m+1 }(G))$
then we have  $\ks (x)\neq 0$ just combining Propositions
\ref{comesfromtc} and \ref{Tor}.
\end{proof}

\section{Proof of Theorem \ref{Lopez}}

\begin{notations}
\label{smallcontraction} Applying  Proposition \ref{contraction},
and Notations \ref{notationscontraction}, to the complete
intersection $W=X\cap G$  of Theorem \ref{main},  we get  a
morphism
$$  \widetilde{Y}:=Bl_W(Y)\longrightarrow \qc \subset  \Proj (\vc^*_{W,
d}) \hskip3mm \vc_{W, d}:=\vc_d\cap  H^0(Y,\ic_{W,Y}(d)).$$ This
map  contracts $G\cong \widetilde G:=Bl_W(G) \subset
\widetilde{Y}$ to a point $p\in \qc$, and sends
$\widetilde{Y}-\widetilde G$ isomorphically to $\qc - \{p\}$. By
(\cite{DGF1}, Remark 3.1),  both $\tY $ and $\qc $ have at worst
isolated singularities.
\end{notations}

\begin{corollary}
\label{premain}
The push-forward map:
$$ H_{m+2}(\tY) \longrightarrow H_{m+2}(\qc)$$
is surjective, thus the cokernel of the   map
$$ H_{m+2}(\tY) \longrightarrow H^{m}(X)$$
is torsion free, for a general $X\in \vert\vc_{W, d}\vert$.
\end{corollary}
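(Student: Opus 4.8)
The plan is to derive both assertions of the Corollary from a single fact: the push-forward
$$i_*\colon H_{m+1}(\widetilde{G})\longrightarrow H_{m+1}(\tY)$$
is injective, where, as in Notations \ref{smallcontraction}, $\widetilde{G}:=Bl_W(G)\cong G$ denotes the copy of $G$ contracted by $\nu\colon\tY\to\qc$ to the point $p$, and $i\colon\widetilde{G}\hookrightarrow\tY$ is the inclusion. Recall $\tY=Bl_W(Y)$, and that both $\tY$ and $\qc$ have at worst isolated singularities. Since $X$ is general, $\tX:=Bl_W(X)\cong X$ is a smooth divisor on $\tY$ disjoint from $\widetilde{G}$ and from the finitely many singular points of $\tY$; moreover $\nu(\tX)=:V$ is a general hyperplane section of $\qc$ avoiding $p$, and $\nu$ restricts to an isomorphism $\tX\cong V$. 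With these identifications, the second map of the Corollary is the Gysin map of the smooth Cartier divisor $\tX\subset\tY$, followed by the Poincar\'e duality $H_m(\tX)\cong H^m(\tX)=H^m(X)$.

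The heart of the matter — and the step I expect to be the main obstacle — is the injectivity of $i_*$, which is where Theorem \ref{main} enters. Let $E\subset\tY$ be the exceptional divisor of $Bl_W(Y)$; since $W=X\cap G$ has codimension $2$ in $Y$, the projection $q\colon E\to W$ is a $\Ps^1$-bundle. Because $W$ is a Cartier divisor on $G$, the strict transform $\widetilde{G}\cong G$ meets $E$ along $\Ps(N_{W/G})$, a section $s\colon W\hookrightarrow E$ of $q$ (so $q\circ s=\mathrm{id}_W$, whence $s_*$ is injective). The plan is to compute, for $x\in H_{m+1}(\widetilde{G})=H_{m+1}(G)$, the intersection of $i_*(x)$ with the Cartier divisor $E$ by the projection formula,
$$i_*(x)\cap[E]=s_*\bigl(x\cap i^*[E]\bigr)=s_*(\ks(x)),$$
using that $i^*[E]=[\,\widetilde{G}\cap E\,]$ corresponds, under $\widetilde{G}\cong G$, to the divisor $W\subset G$, so that $x\cap i^*[E]$ is exactly the Gysin image $\ks(x)\in H_{m-1}(W)$. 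Since $s_*$ is injective and $\ks$ is injective by Theorem \ref{main}, the composite $(\cap\,[E])\circ i_*=s_*\circ\ks$ is injective, and therefore so is $i_*$.

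Granting this, I would obtain the surjectivity of $\nu_*\colon H_{m+2}(\tY)\to H_{m+2}(\qc)$ as follows. As $\nu$ is proper, collapses $\widetilde{G}$ to $p$, and is a homeomorphism on the complements, it induces isomorphisms $H_\bullet(\tY,\widetilde{G})\cong H_\bullet(\qc,\{p\})\cong H_\bullet(\qc)$ in positive degrees, through which $\nu_*$ factors as $H_{m+2}(\tY)\to H_{m+2}(\tY,\widetilde{G})\cong H_{m+2}(\qc)$. In the long exact sequence of the pair $(\tY,\widetilde{G})$ the map $H_{m+2}(\tY)\to H_{m+2}(\tY,\widetilde{G})$ is surjective precisely when the connecting homomorphism $H_{m+2}(\tY,\widetilde{G})\to H_{m+1}(\widetilde{G})$ vanishes, i.e. precisely when $i_*\colon H_{m+1}(\widetilde{G})\to H_{m+1}(\tY)$ is injective. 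This is exactly what was proved, giving the first assertion.

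For the torsion-freeness I would argue with the pair $(\tY,\tY-\tX)$. Since $\tX$ is smooth and avoids $\mathrm{Sing}\,\tY$, the Thom isomorphism gives $H_{m+2}(\tY,\tY-\tX)\cong H_m(\tX)=H^m(X)$, the Corollary's map being the resulting Gysin map $g$; from the exact sequence one gets $\mathrm{coker}(g)\cong\ker\bigl(j\colon H_{m+1}(\tY-\tX)\to H_{m+1}(\tY)\bigr)$. Now $\tY-\tX=\nu^{-1}(\qc-V)$, and $\qc-V$ is the complement in $\qc$ of the hyperplane section $V$, hence an affine variety of dimension $m+1$; therefore $H_{m+1}(\qc-V)$ is free, being the top homology of an affine variety. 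The contraction $\tY-\tX\to\qc-V$ collapses $\widetilde{G}$ to $p$, so the long exact sequence of $(\tY-\tX,\widetilde{G})$ gives $H_{m+1}(\tY-\tX)/\mathrm{Im}(\alpha)\hookrightarrow H_{m+1}(\qc-V)$, where $\alpha\colon H_{m+1}(\widetilde{G})\to H_{m+1}(\tY-\tX)$; hence $\mathrm{Tor}\,H_{m+1}(\tY-\tX)\subseteq\mathrm{Im}(\alpha)$. Since $j\circ\alpha=i_*$ is injective, $\ker(j)\cap\mathrm{Im}(\alpha)=0$, so $\ker(j)$ — and thus $\mathrm{coker}(g)$ — is torsion-free. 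The only genuine difficulty is the projection-formula identification $(\cap\,[E])\circ i_*=s_*\circ\ks$ of the second paragraph; once $i_*$ is known to be injective, both parts of the Corollary follow formally.
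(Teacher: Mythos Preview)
Your proof is correct. The overall architecture of the first assertion coincides with the paper's: both reduce surjectivity of $\nu_*$ to the injectivity of $i_*\colon H_{m+1}(\widetilde G)\to H_{m+1}(\tY)$ via the comparison of the long exact sequences of the pairs $(\tY,\widetilde G)$ and $(\qc,\{p\})$. The difference is in how that injectivity is obtained: the paper simply invokes Corollary~2.6 of \cite{RCMP} together with Theorem~\ref{main}, whereas you supply a self-contained argument by capping with the exceptional divisor $E$ and using the base-change identity $(\cap\,[E])\circ i_*=s_*\circ\ks$, so that injectivity of $\ks$ (Theorem~\ref{main}) and of $s_*$ (it has $q_*$ as a left inverse) force injectivity of $i_*$. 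Your route avoids the external citation at the cost of spelling out a refined-Gysin compatibility; this is a fair trade.

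For the torsion-freeness the two proofs genuinely diverge. The paper uses the surjectivity just proved to replace $\tY$ by $\qc$, identifies $H_{m+2}(\qc)\cong H^m(\qc-\mathrm{Sing}\,\qc)$, and then appeals to Lefschetz Theorem with Singularities \cite{GM} to conclude that the cokernel of $H^m(\qc-\mathrm{Sing}\,\qc)\to H^m(X)$ is torsion free. You instead analyse the pair $(\tY,\tY-\tX)$: the Thom isomorphism identifies the relevant cokernel with $\ker(j)$, the affineness of $\qc-V$ gives freeness of $H_{m+1}(\qc-V)$ and hence $\mathrm{Tor}\,H_{m+1}(\tY-\tX)\subseteq\mathrm{Im}(\alpha)$, and the equality $j\circ\alpha=i_*$ (injective) kills any torsion in $\ker(j)$. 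This is a nice alternative: it reuses the injectivity of $i_*$ a second time and replaces the Lefschetz-with-singularities input by the CW-dimension bound for affine varieties. The paper's argument is shorter once one is willing to quote \cite{GM}; yours is more elementary and makes the role of $i_*$ more transparent.
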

\begin{proof}
From the commutative diagram
$$
\begin{array}{ccccccc}
H_{k}(\widetilde G)&\stackrel {}{\to}&H_{k}(\tY)&\stackrel{}{\to}
&H_{k}(\tY,\widetilde G)&{\to}&H_{k-1}(\widetilde G)\\
\downarrow  & &\stackrel {}{}\downarrow & & \stackrel {}{}\Vert& &\downarrow  \\
H_{k}(\{p\})&\stackrel
{}{\to}&H_{k}(\qc)&\stackrel{}{\to}&H_{k}(\qc,\{p\})
&{\to}&H_{k-1}(\{p\})\\
\end{array}$$
we see that $H_{m+2}(\tY)\to H_{m+2}(\qc )$ is surjective if the
push-forward $H_{m+1}(\widetilde G)\to H_{m+1}(\tY)$ is injective,
and this follows simply combining Theorem \ref{main} with
Corollary 2.6 of \cite{RCMP}. The last statement is direct
consequence of the first. In fact
$${\rm coker}(H_{m+2}(\tY) \longrightarrow H^{m}(X)) \cong {\rm coker}(H_{m+2}(\qc ) \longrightarrow H^{m}(X)),$$
and the last group is torsion free by Lefschetz Theorem with
Singularities (\cite{GM}, p.199), because
$$H_{m+2}(\qc )\cong H_{m+2}^{BM}(\qc-{\rm{Sing}}\qc )\cong H^{m}(\qc-{\rm Sing}\qc ).$$
\end{proof}

\begin{remark}
\label{explanation} By  Corollary 2.6 of  \cite{RCMP},
$H_{m+2}(\tY)\cong H_{m+2}(Y)\oplus H_m(W)$, hence Corollary
\ref{premain} implies that the group
$${\rm coker}(H_{m+2}(Y)\oplus H_m(W)\to  H^{m}(X))$$
has no torsion. In the morphism above the first component is
intended to be the Gysin map followed by Poincar\'e duality, and
the second one is intended to be the push forward followed by
Poincar\'e duality.
\end{remark}

\begin{notations}
\label{induction} Let $Y\subset \Proj$ be a smooth projective
variety of dimension $m+1=2r+1\geq 3$. Let
$\overline{X},G_1,\dots,G_r$ be a regular sequence of smooth
divisors in $Y$, with $\overline {X}\in \vert\vc_d\vert$, each
$G_i \in \vert\vc_{d_i}\vert$, and such that $d>d_1>\dots>d_r$.
Set $\Delta:=\overline{X}\cap G_1\cap\dots \cap G_r$, and
$W:=\overline X\cap G_1$. For any $1\leq l\leq r-1$ fix general
divisor $H_l\in \vert\vc_{\mu_l}\vert$, with $0\ll
\mu_1\ll\dots\ll \mu_{r-1}$, and for any $0\leq l\leq r-1$ define
$(Y_l,X_l,W_l,\Delta_l)$ as follows. For $l=0$ define
$(Y_0,X_0,W_0,\Delta_0):=(Y,X,W,\Delta)$, $X\in \vert\vc_{W,
d}\vert$ general. For $1\leq l\leq r-1$ define $Y_l:=G_1\cap\dots\cap
G_l\cap H_1\cap\dots\cap H_l$, $X_l:=X\cap Y_l$, $W_l:=X\cap
Y_l\cap G_{l+1}$, and $\Delta_l:=\Delta\cap Y_l$ ($m_l:=\dim
X_l=m-2l$). Notice that $\dim\,Y_{r-1}=3$ and that
$\Delta_{r-1}=W_{r-1}$.
\end{notations}

\begin{remark}
\label{finalremarks}
\begin{enumerate}
\item As in Theorem \ref{Lopez}, define: $$\vc_{l,d}:={\rm{Im}}(H^0(\Proj,\oc_{\Proj}(d))\to  H^0(Y_l,\oc_{Y_l}(d))).$$
\item As in Notations \ref{smallcontraction},  define:
$$ \widetilde{Y}_l:=Bl_{W_l}(Y_l)\longrightarrow \qc_l   \subset\Proj (\vc^*_{W_l, d}),
\hskip3mm \vc_{W_l, d}:=\vc_{l,d} \cap  H^0(Y_l,\ic_{W_l,
Y_l}(d)).
$$
By Corollary \ref{premain} and Remark \ref{explanation} the
group
$$\V_l:={\rm{coker}}(H_{m_l+2}(\qc _l)\to H^{m_l}(X_l))= $$
$$={\rm{coker}}(H_{m_l+2}(Y_l)\oplus H_{m_l}(W_l)\to  H^{m_l}(X_l))$$
is torsion free.
\item  By (\cite{DGF}, Theorem 1.1),  $ \V_l \otimes\, \mathbb{Q}$ supports an
irreducible action of the monodromy group of the linear system
$\vert\vc_{W_l, d}\vert_{\vert_{Y_l}}$. Moreover, by previous
remark, we have  $\V_l \subset \V_l \otimes \mathbb{Q}$.
\end{enumerate}
\end{remark}

Theorem \ref{Lopez} will follow from a slightly stronger result:

\begin{theorem}
\label{Main} Let $Y\subset \Proj$ be a smooth projective variety
of dimension $m+1=2r+1\geq 3$. Let $\overline{X},G_1,\dots,G_r$ be
a regular sequence of smooth divisors in $Y$, with $\overline
{X}\in \vert\vc_d\vert$, each $G_i \in \vert\vc_{d_i}\vert$, and
such that $d>d_1>\dots>d_r$. Set $\Delta:=\overline{X}\cap
G_1\cap\dots \cap G_r$ and let $X\in \vert\vc_d\cap
H^0(Y,\ic_{\Delta,Y}(d))\vert$ be a very general hypersurface
containing $\Delta$. Assume that the vanishing cohomology of $X$
is not of pure Hodge type $(\frac{m}{2},\frac{m}{2})$, denote by
$H^m(X; \mathbb{Z})_{\Delta}$ the subgroup of $H^m(X; \mathbb{Z})$
generated by the components of $\Delta$ and by $H^m(X;
\mathbb{Z})_{\Delta^-}$ the subgroup of $H^m(X; \mathbb{Z})$
generated by the components of $\Delta$ except one. Then we have:
\begin{enumerate}
\item $H^m(X; \mathbb{Z})_{\Delta}$ is freely generated by the components of $\Delta$;
\item $NS_r(X;\mathbb{Z})= \left[NS_r(X;\mathbb{Z})\cap H^m(Y; \mathbb{Z})\right]\oplus H^m(X; \mathbb{Z})_{\Delta^-}$;
\item $NS_r(X;\mathbb{Q})= NS_{r+1}(Y;\mathbb{Q})\oplus H^m(X; \mathbb{Q})_{\Delta^-}$.
\end{enumerate}
\end{theorem}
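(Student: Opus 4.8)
Since a free abelian group of rank $\rho+\sigma$ generated by the $\rho+\sigma$ components of $\Delta$ is automatically freely generated by them, part (1) is exactly the first conclusion of Theorem \ref{RCMPLopez}, and part (3) is its second conclusion; so the genuinely new content is the integral splitting (2). Throughout we view $H^m(Y;\mathbb{Z})$ as a subgroup of $H^m(X;\mathbb{Z})$ via restriction, which is injective by the Lefschetz Hyperplane Theorem because $X\in\vert\vc_d\vert$ is an ample divisor on $Y$; write $A:=H^m(Y;\mathbb{Z})$, $B:=H^m(X;\mathbb{Z})_{\Delta}$ and $B^-:=H^m(X;\mathbb{Z})_{\Delta^-}$. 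The plan is to prove (2) by induction on $r$ along the tower $(Y_l,X_l,W_l,\Delta_l)$ of Notations \ref{induction}, the base case $r=1$ ($\dim X=2$, $\dim Y=3$) being the integral N\'eron--Severi theorem of Lopez for a very general surface in a threefold.

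\emph{Directness.} Set $\tc:=\bigcap_{i=1}^rG_i$ and let $[\Delta]_X$ denote the sum of the classes of the components of $\Delta$; since $\Delta=X\cap\tc$, one has $[\Delta]_X=j^*[\tc]\in A$, i.e. $[\Delta]_X$ comes from $Y$. By parts (1) and (3) the $\Delta^-$-components have $\mathbb{Q}$-linearly independent images in the vanishing cohomology $H^m(X;\mathbb{Q})_{\rm van}$, whence $(A\otimes\mathbb{Q})\cap(B^-\otimes\mathbb{Q})=0$. Intersecting over $\mathbb{Z}$ gives $\left[NS_r(X;\mathbb{Z})\cap A\right]\cap B^-=0$, so the sum in (2) is direct; it remains to prove that it exhausts $NS_r(X;\mathbb{Z})$.

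\emph{The crux: confining $NS_r(X;\mathbb{Z})$.} Consider the torsion-free cokernel
$$\V_0:={\rm{coker}}\left(H_{m+2}(Y)\oplus H_m(W)\to H^m(X)\right)$$
of Corollary \ref{premain} and Remark \ref{explanation}, whose rationalization carries an irreducible action of the monodromy group of $\vert\vc_{W,d}\vert$ (Remark \ref{finalremarks}). Fix $\alpha\in NS_r(X;\mathbb{Z})$; since $X$ is very general, the image of the Hodge class $\alpha$ in $\V_0\otimes\mathbb{Q}$ is invariant under monodromy. By the irreducibility of this action together with the hypothesis that the vanishing cohomology of $X$ is not of pure type $(\tfrac{m}{2},\tfrac{m}{2})$, the module $\V_0\otimes\mathbb{Q}$ has no nonzero monodromy-invariant Hodge class, so the image of $\alpha$ vanishes there. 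Because $\V_0$ is torsion free, $\V_0\hookrightarrow\V_0\otimes\mathbb{Q}$, hence $\alpha$ already maps to $0$ in $\V_0$ itself. Thus
$$NS_r(X;\mathbb{Z})\subseteq{\rm{Im}}\left(H_{m+2}(Y)\oplus H_m(W)\to H^m(X)\right)=A+{\rm{Im}}\left(H_m(W)\to H^m(X)\right).$$

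\emph{Conclusion and the main obstacle.} It remains to identify this last group, intersected with $NS_r(X;\mathbb{Z})$, with $\left[NS_r(X;\mathbb{Z})\cap A\right]+B^-$. Since $\Delta\subseteq W$, the subgroup ${\rm{Im}}(H_m(W)\to H^m(X))$ contains every component class, hence contains $B$; and from $[\Delta]_X\in A$ one gets that the one dropped component lies in $A+B^-$, so $B\subseteq B^-+A$ integrally, allowing a component to be discarded. The remaining point is that the excess of ${\rm{Im}}(H_m(W)\to H^m(X))$ over $B$ contributes nothing new to $NS_r(X;\mathbb{Z})$ modulo $A$: this is where the inductive hypothesis enters, applied to the next stage $(Y_1,X_1,W_1,\Delta_1)$ of the tower (of the same shape with $r$ replaced by $r-1$), the transfer between levels being controlled by Lefschetz restriction to the very ample generic slices $H_l$ and by the torsion-freeness of the relevant cokernels. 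Combined with directness this yields (2). I expect the hard part to be precisely this last transfer between levels, together with the integral upgrade in the crux step: the rational monodromy irreducibility only governs $NS_r(X;\mathbb{Q})$, and it is the torsion-freeness, resting ultimately on the Gysin injectivity of Theorem \ref{maintech}, that prevents a torsion N\'eron--Severi class from escaping the lattice generated by the cycles coming from $Y$ and from $W$; making the Hodge-theoretic input of the crux step fully precise (relating the Hodge type of $\V_0\otimes\mathbb{Q}$ to that of the vanishing cohomology) is the other delicate ingredient.
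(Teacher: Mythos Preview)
Your plan shares the scaffolding with the paper (the tower of Notations \ref{induction} and the torsion-freeness of the $\V_l$), but the reduction you attempt is different and the ``crux'' step does not work as stated. The paper never argues about $NS_r(X;\mathbb{Z})$ inside the induction. It first reduces (1)--(3), via Theorem \ref{RCMPLopez}, to the purely topological claim that
\[
\W_0:=\mathrm{coker}\bigl(H_{m+2}(Y)\oplus H_m(\Delta)\to H^m(X)\bigr)
\]
is torsion-free; the Hodge-type hypothesis and the ``very general'' condition are entirely absorbed by this reduction. It then proves, by decreasing induction on $l$, that $\W_l=\V_l$. The base case $l=r-1$ is trivial because $\Delta_{r-1}=W_{r-1}$ (not Lopez's theorem), and the inductive step shows that the composite $H_{m_l}(W_l)\hookrightarrow H_{m_{l+1}}(X_{l+1})\to\W_{l+1}=\V_{l+1}$ vanishes, using monodromy irreducibility of $\V_{l+1}\otimes\mathbb{Q}$ together with the \emph{rank} inequality $\mathrm{rk}\,H_{m_l}(W_l)\ll\mathrm{rk}\,\V_{l+1}$, available because one may take $\mu_{l+1}\gg 0$. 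No Hodge input enters this part.

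The gap in your crux: for $\V_0$ to be defined and for the $\vert\vc_{W,d}\vert$-monodromy to act on it, one needs $X\in\vert\vc_{W,d}\vert$; but for every $\alpha\in NS_r(X;\mathbb{Z})$ to spread out and become monodromy-invariant, one needs $X$ very general in $\vert\vc_{\Delta,d}\vert$. These are incompatible, since a very general member of $\vert\vc_{\Delta,d}\vert$ does not contain $W$. The same tension afflicts your proposed inductive hypothesis (the NS splitting on $X_1$), which would require $X_1=X\cap Y_1$ to be itself very general in its own linear system on $Y_1$---something you have no control over. The paper sidesteps both issues by proving instead the torsion-freeness of $\W_0$, a statement depending only on $\Delta$ and constant over the smooth locus of $\vert\vc_{\Delta,d}\vert$, for one conveniently chosen $X\in\vert\vc_{W,d}\vert$, where $\W_0$ happens to coincide with $\V_0$. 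Finally, your invocation of the Hodge hypothesis in the crux is unnecessary: irreducibility of the monodromy on $\V_0\otimes\mathbb{Q}$ already rules out any nonzero invariant class, so the difficulty you anticipate about relating the Hodge type of $\V_0\otimes\mathbb{Q}$ to that of the vanishing cohomology does not arise.
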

\begin{proof}[Proof of Theorem \ref{Main}] The argument is very similar to that already used in the proof of
Theorem 3.3 of \cite{RCMP}, so we are going to be rather sketchy.
Thanks to Theorem 1.2 of \cite{RCMP}, it suffices to show that the
cokernel  of the map
$$ H_{m+2}(Y) \oplus H_{m}(\Delta)\longrightarrow H^m(X )
$$
is free. In order to prove this, we argue by decreasing  induction
on $l$ and prove that
$$\W_l :={\rm coker}(  H_{m_l+2}(Y_l)\oplus H_{m_l}(\Delta_l)  \longrightarrow H^{m_l}(X_l ))$$
coincides with the group $\V_l$ defined in   Remark
\ref{finalremarks}, (2). For $l=r-1$ this is clear because
$\Delta_{r-1}=W_{r-1}$, compare with Notations \ref{induction}.
Observe that we only need  to prove the following inclusion:
$${\rm{Im}}(H_{m_l+2}(Y_l) \oplus H_{m_l}(\Delta_l)\longrightarrow H^{m_l}(X_l))
\supseteq {\rm{Im}}(H_{m_l+2}(Y_l)\oplus
H_{m_l}(W_l)\longrightarrow H^{m_l}(X_l))$$ for the reverse
inclusion is obvious. Notice that the composite:
$$H_{m_l}(W_l)\hookrightarrow H_{m_{l+1}}(X_{l+1})\longrightarrow \W_{l+1}$$
vanishes   because $\W_{l+1}= \V_{l+1}$ by induction, the
monodromy acts irreducibly on $\V_{l+1}$ and we can assume that
${\rm{rk}} H_{m_l}(W_l)\ll {\rm{rk}}\V _{l+1}$. Again by induction
we find that
$${\rm{Im}}(H_{m_l}(W_l)\hookrightarrow H_{m_{l+1}}(X_{l+1}))$$ is contained in  $$ {\rm{Im}}(H_{m_{l+1}+2}(Y_{l+1})
\oplus H_{m_{l+1}}(\Delta_{l+1})  \to   H_{m_{l+1}}(X_{l+1 } )),$$
and we are done by a  simple arrow chasing showing that, if
$\alpha \in H_{m_l}(W_l)$ goes to
${\rm{Im}}(H_{m_{l+1}+2}(Y_{l+1})  \to H_{m_{l+1}}(X_{l+1 }))$,
then its push forward in $H_{m_l}(X_l)$ belongs to
${\rm{Im}}(H_{m_{l}+2}(Y_{l})  \to  H_{m_{l}}(X_{l  }))$.
\end{proof}


\begin{thebibliography}{s=5}

\bibitem{DGF1} Di Gennaro, V. - Franco, D.: {\it Factoriality and N\'eron-Severi groups},
Commun.  Contemp. Math. {\bf 10}, 745-764, 2008.

\bibitem{DGF} Di Gennaro, V. - Franco, D.: {\it Monodromy of a family of hypersurfaces},
Ann. Scient. \'Ec. Norm. Sup., $4^{e}$ s\'erie, t. 42, 517-529,
2009.

\bibitem{IJM} Di Gennaro, V. - Franco, D.: {\it Noether-Lefschetz Theory and N\'eron-Severi group},
Int. J. Math. {\bf 23} (2012), 1250004.


\bibitem{RCMP} Di Gennaro, V. - Franco, D.: {\it Noether-Lefschetz Theory with base locus},
Rend. Circ. Mat.  Palermo {\bf 63},  257-276, 2014.

\bibitem{Dimca1} Dimca, A.: {\it Singularities and Topology of
Hypersurfaces}, Springer Universitext, New York 1992.

\bibitem{Dimca2} Dimca, A.: {\it Sheaves in Topology}, Springer Universitext, 2004.

\bibitem{Vogel} Flenner, H. - O'Carroll L. - Vogel W.: {\it Joins and
intersections}, Springer-Verlag, 1999.

\bibitem{FultonIT} Fulton, W.: {\it Intersection theory}, Ergebnisse
der Mathematik und ihrer Grenzgebiete; 3.Folge, Bd. 2,
Springer-Verlag 1984.

\bibitem{FultonYT} Fulton, W.:{\it  Young Tableaux}, London Mathematical Society Student Texts 35.
Cambridge University Press 1997.

\bibitem{GM}  Goresky, M. - MacPherson, R.: {\it Stratified Morse Theory}, Ergebnisse
der Mathematik und ihrer Grenzgebiete; 3.Folge, Bd. 14,
Springer-Verlag 1988.

\bibitem{GH} Griffiths, P. - Harris, J.: {\it On the Noether-Lefschetz theorem and some remarks
on codimension two cycles}, Math. Ann.  {\bf 271}, 31-51, 1985.

\bibitem{Iversen} Iversen, B: {\it Cohomology of Sheaves}
Universitext. Springer, 1986.

\bibitem{Lopez} Lopez, A. F.: {\it Noether-Lefschetz theory and the Picard group of projective surfaces},
Mem. Amer. Math. Soc.  {\bf 89}, 1991.

\bibitem{Spanier} Spanier, E.H.: {\it Algebraic Topology}, McGraw-Hill
Series in Higher Mathematics, 1966

\bibitem{Voisin} Voisin, C.: {\it Hodge Theory and Complex Algebraic Geometry, I}, Cambridge Studies in Advanced Mathematics 76.
 Cambridge University Press, 2007.
\end{thebibliography}
\end{document}